\NeedsTeXFormat{LaTeX2e} 

\documentclass[12pt]{amsproc}
\usepackage{amsmath,amsthm}
\usepackage{amsfonts,amssymb}
\hfuzz1pc 


\newtheorem{thm}{Theorem}[section]
\newtheorem{cor}[thm]{Corollary}
\newtheorem{lem}[thm]{Lemma}
\newtheorem{prop}[thm]{Proposition}

\theoremstyle{remark}

\def\CF{{\mathcal F}}

\def\CH{{\mathcal H}}

\def\CR{{\mathcal R}}
\def\CL{{\mathcal L}}

\def\h{{\mathfrak h}}

\def\C{{\mathbb C}}
\def\H{{\mathbb H}}
\def\N{{\mathbb N}}
\def\R{{\mathbb R}}

\def\Z{{\mathbb Z}}

\def\1{\text{\bf {1}}}

\def\h{{\mathfrak h}}

\begin{document}

\title[Riesz transforms]
{Revisiting Riesz transforms on Heisenberg groups}
\author{P. K. Sanjay}

 \author{ S. Thangavelu}

\address{Department of Mathematics\\ Indian Institute
of Science\\Bangalore-560 012}
\email[Sanjay P. K.]{sanjay@math.iisc.ernet.in}
 \email[S. Thangavelu]{veluma@math.iisc.ernet.in}
 \address{Permanent address(Sanjay P. K.):Department of Mathematics, National Institute of Technology, Calicut- 673 601}

\date{\today}
\keywords{}
\subjclass{47B35, 43A85, 22E30}
\thanks{ }
\maketitle

\begin{center}
Dedicated to Eli Stein on his eightieth birthday
\end{center}

\begin{abstract}
We characterise higher order Riesz transforms on the Heisenberg group and 
also show that they satisfy dimension-free bounds under some assumptions on 
the multipliers. Using transference theorems, we deduce boundedness theorems
for Riesz transforms on the reduced Heisenberg group and hence also for 
the Riesz transforms associated to multiple Hermite and Laguerre expansions.

\end{abstract}


\section{Introduction}
\setcounter{equation}{0}

The aim of this paper is twofold: to prove dimension free estimates for Riesz
transforms associated to reduced Heisenberg groups $ \H^n_{red} $ and to
study Riesz transforms of higher order on Heisenberg groups $ \H^n $ which
are in a sense canonical. Dimension free estimates for Riesz transforms on
$ \H^n $ have been studied in the literature by Coulhon et al. \cite{CMZ}
but the proof
given there, which depends very much on the dilation structure of $ \H^n $,
does not work for the reduced Heisenberg group. However, we can view the 
Riesz transforms on the Heisenberg group (reduced Heisenberg group) as an 
operator valued multiplier for the Fourier transform (resp. Fourier series). 
Hence using a transference theorem we can deduce results for the reduced 
Heisenberg group from those for $ \H^n.$ Let us set up the notation before 
stating our results.

Let $ \H^{n} = \C^n \times \R$ denote the (2n+1)-dimensional Heisenberg group
with the group operation
$(z,t)(w,s)=(z+w, t+s+\frac{1}{2}Im(z.\bar{w}))$. Its Lie algebra $\h_n$ is
generated by the $(2n+1)$ left invariant vector fields
$$ T=\frac{\partial}{\partial t},
 X_j=\left(\frac{\partial}{\partial x_j}+\frac{1}{2}y_j
\frac{\partial}{\partial t}\right), Y_j=\left(\frac{\partial}{\partial y_j}
-\frac{1}{2}x_j \frac{\partial}{\partial t}\right) $$
$ (j=1,2,\cdots n).$ The operator
$ \mathcal{L}= -\sum^n_{j=1}(X_{j}^{2}+Y_{j}^{2})$ is called  the sublaplacian
on $H^n $. Written explicitly
$$ \mathcal{L} =-\Delta_{z}-\frac{1}{4}|z|^2 \frac{\partial^2}{\partial t^2}
+N \frac{\partial}{\partial t} $$ where $\Delta_z$ is the
ordinary Laplacian on $\C^n$ and
$$ N=\sum^n_{j=1} \left(x_i\frac{\partial}{\partial y_j}-y_i \frac{\partial}{\partial x_j}\right).$$
We can also write $ \mathcal{L}$ as
$$ \mathcal{L}= -\frac{1}{2}\sum^n_{j=1}(Z_j \bar{Z}_j + \bar{Z}_j Z_j)$$
where $Z_j=(X_j - i Y_j)$ and $\bar{Z}_j=(X_j+i Y_j)$ for
$j=1,2,\cdots n$. We define the Riesz transforms
$ R_j=Z_j \mathcal{L}^{-1/2}$ and
$\bar{R}_j=\bar{Z}_j \mathcal{L}^{-1/2},j=1,2,\cdots,n$  associated with
$\mathcal{L}$.

These operators are known to be singular integral operators on $ \H^n $ and
hence bounded on $ L^p(\H^n), 1 < p < \infty,$ and of weak type $(1,1).$
Moreover, in \cite{CMZ} it is proved that the bounds do not depend on the
dimension of $ \H^n.$ In this work we consider the same operators acting on
the reduced Heisenberg group $ \H^n_{red} = \H^n/\Gamma $ where $ \Gamma =
\{ (0,2\pi k): k \in \Z \} $ is a central subgroup. Thus functions on
$ \H^n/\Gamma$ are $ 2\pi $ periodic in the central variable.  For the Riesz
transforms acting on $ L^p(\H^n/\Gamma) $ we prove the following theorem.

\begin{thm} For each $1<p<\infty$, there exists a constant $C_p$ independent
of the dimension n, such that for all $f\in L^p(H^n/\Gamma),$
$$ \left\| \left(\sum^n_{j=1}|R_j f|^2+
\sum^n_{j=1}|\bar{R}_j f|^2\right)^{1/2} \right\| _{p} \leq C_p
\| f \|_p.$$
\end{thm}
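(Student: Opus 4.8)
The plan is to realise the Riesz transforms on $\H^n_{red}$ as operator-valued Fourier multipliers in the central variable and to transfer to them the dimension-free bounds already known on $\H^n$ via a de Leeuw--type restriction theorem.

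First I would record how $\mathcal{L}$, $Z_j$ and $\bar Z_j$ transform under the partial Fourier transform in the central variable $t$. Since $T=\partial/\partial t$ is central, each of $\mathcal{L}$, $Z_j$, $\bar Z_j$ commutes with translations in $t$; taking the Fourier transform in $t$ and writing $\lambda$ for the dual variable, they become families of operators $\mathcal{L}_\lambda$, $Z_j(\lambda)$, $\bar Z_j(\lambda)$ on functions of $z\in\C^n$, where $\mathcal{L}_\lambda=-\Delta_z+\tfrac14\lambda^2|z|^2+i\lambda N$ is the scaled special Hermite operator. Hence, identifying $L^p(\H^n)$ with $L^p(\R_t;L^p(\C^n))$, the Riesz transform $R_j$ is the Fourier multiplier in $t$ with operator-valued symbol $m_j(\lambda)=Z_j(\lambda)\mathcal{L}_\lambda^{-1/2}$, and $\bar R_j$ has symbol $\bar m_j(\lambda)=\bar Z_j(\lambda)\mathcal{L}_\lambda^{-1/2}$. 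Assembling these into $M(\lambda)\colon L^p(\C^n)\to L^p(\C^n;\ell^2)$, $M(\lambda)g=(m_1(\lambda)g,\dots,m_n(\lambda)g,\bar m_1(\lambda)g,\dots,\bar m_n(\lambda)g)$, the inequality of \cite{CMZ} says exactly that $\lambda\mapsto M(\lambda)$ is a bounded Fourier multiplier from $L^p(\R;L^p(\C^n))$ to $L^p(\R;L^p(\C^n;\ell^2))$ with norm at most a constant $C_p$ independent of $n$.

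Next I would pass to the reduced group. A function $f\in L^p(\H^n_{red})$ is $2\pi$-periodic in $t$ and has a Fourier expansion $f(z,t)=\sum_{k\in\Z}f_k(z)e^{ikt}$; since $\mathcal{L}$, $Z_j$, $\bar Z_j$ still commute with translations in $t$ they act block-diagonally, so that $R_jf$ has Fourier coefficients $m_j(k)f_k$ and likewise for $\bar R_j$. Thus, identifying $L^p(\H^n_{red})$ with $L^p(\T;L^p(\C^n))$, the operator $f\mapsto Mf$ is precisely the periodic Fourier multiplier whose symbol is the restriction of $\lambda\mapsto M(\lambda)$ to the integers. It therefore suffices to invoke the vector-valued de Leeuw restriction theorem: if $\lambda\mapsto M(\lambda)$ is a bounded Fourier multiplier from $L^p(\R;X)$ to $L^p(\R;Y)$ and is continuous in the strong operator topology, then $\{M(k)\}_{k\in\Z}$ is a bounded Fourier multiplier from $L^p(\T;X)$ to $L^p(\T;Y)$ with no larger norm. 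Applying this with $X=L^p(\C^n)$ and $Y=L^p(\C^n;\ell^2)$ gives the asserted inequality with the same constant $C_p$, in particular independent of $n$.

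The step I expect to require genuine care is the hypothesis of the transference lemma, namely the continuity of $\lambda\mapsto M(\lambda)$, and especially its behaviour at $\lambda=0$, which produces the $k=0$ term: there $\mathcal{L}_0=-\Delta_z$ and $m_j(0)$ is the ordinary Euclidean Riesz transform on $\R^{2n}$. One must either establish strong continuity of $\mathcal{L}_\lambda^{-1/2}$ and of $Z_j(\lambda)$ in $\lambda$ (for which the explicit spectral decomposition of $\mathcal{L}_\lambda$ in terms of Laguerre functions, together with uniform estimates for these functions, is the natural tool), or replace de Leeuw's theorem by a more robust transference argument exploiting the translation actions of $\R$ on $\H^n$ and of $\T$ on $\H^n_{red}$ in the style of Coifman--Weiss, which avoids pointwise continuity altogether. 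A minor point along the way is to note that the multiplier norms on $\H^n$ and on $\H^n_{red}$ literally coincide with the corresponding operator-valued multiplier norms, which here is immediate since the Riesz transforms are defined through these symbols. Once these technical matters are settled, the argument is complete.
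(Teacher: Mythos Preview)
Your approach is essentially identical to the paper's: both realise the Riesz transforms as operator-valued Fourier multipliers in the central variable and invoke a vector-valued de Leeuw transference theorem (the paper's Theorem~2.1) to pass from the known dimension-free bound on $\H^n$ to $\H^n/\Gamma$. The paper states the transference with the same continuity hypothesis at integer points that you flag, and---like you---leaves its verification (including the $\lambda=0$ case) as a detail to be checked.
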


As we mentioned earlier, we deduce this theorem from the result of 
\cite{CMZ} on the Heisenberg group. The appropriate transference theorem
is stated in Section 2.

We now turn our attention to higher order Riesz transforms on the Heisenberg
group. Observe that the Riesz transforms defined as $ (X_j-iY_j)\CL^{-1/2},
(X_j+iY_j)\CL^{-1/2} $ are multipliers for the group Fourier transform. The
corresponding multipliers are given by $  A_j(\lambda)H(\lambda)^{-1/2} $ and
$  A_j^*(\lambda)H(\lambda)^{-1/2} $ respectively where for
$ j = 1,2,...,n $ and $ \lambda \in \R , A_j(\lambda) $ and $
A_j^*(\lambda) $ are the creation and annihilation
operators:
$$ A_j(\lambda) = -
\frac{\partial}{\partial \xi_j}+\lambda \xi_j ,
A_j^*(\lambda) = \frac{\partial}{\partial \xi_j}+\lambda \xi_j $$
and
$$ H(\lambda) = -\Delta+\lambda^2 |x|^2
= \frac{1}{2}
\sum_{j=1}^n \left( A_j(\lambda)A_j^*(\lambda)+A_j^*(\lambda)A_j(\lambda)\right) $$ is the Hermite operator. We would like to consider higher order Riesz
transforms on $ \H^n $ as Fourier multipliers corresponding to higher order
analogues of $  A_j(\lambda)H(\lambda)^{-1/2} $ and
$  A_j^*(\lambda)H(\lambda)^{-1/2} .$ In analogy with the case of standard
Laplacian on $ \R^n $ the above operators can be considered as analogues of
Riesz transform for the Hermite expansions.
Moreover, as shown in \cite{T0} they turn out
to be pseudodifferential operators of order zero and hence bounded on
$ L^p(\R^n), 1 < p < \infty.$

Higher order Riesz transforms associated to the Hermite operator have also
been studied in the literature  by defining them as operators of the form
$ A(\lambda)^\alpha H(\lambda)^{-\frac{1}{2}|\alpha|} $ and
$ A^*(\lambda)^\alpha H(\lambda)^{-\frac{1}{2}|\alpha|} $ where $ \alpha $
is a multiindex and $ A(\lambda) = ( A_j(\lambda)),
A^*(\lambda) = ( A_j^*(\lambda)) $ are vectors, see \cite{HRST} and \cite{LP}.
Here we consider the operators  $ G_\lambda(P)H(\lambda)^{-\frac{m}{2}} $ as 
the natural candidates for the higher order Riesz transforms, where $ P $ is a 
bigraded solid harmonic on $ \C^n $ and  $ G_\lambda(P)$ is the operator associated to $ P $ by the Weyl correspondence. (For the definition of 
$ G_\lambda(P)$ as well as the Weyl transform $ W_\lambda(P)$ we refer to Section 3; more details can be found \cite{T2}.) This class of operators include 
the preceding ones, because $ P(z) = z^\alpha $ and $ Q(z) = \bar{z}^\alpha $ 
are  bigraded solid harmonics of bidegrees $ (|\alpha|,0) $ and 
$ (0,|\alpha|),$ and by a theorem of Geller \cite{G}, $ G_\lambda(P) = 
A(\lambda)^\alpha $ and $ G_\lambda(Q) = A^*(\lambda)^\alpha.$

\begin{thm} For every bigraded solid harmonic $ P $ of total degree $ m $
the operator $  G_\lambda(P)H(\lambda)^{-\frac{m}{2}} $ is bounded
on $ L^p(\R^n), 1 < p < \infty  $ and is of weak type (1,1).
\end{thm}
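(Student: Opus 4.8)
The plan is to realize $G_\lambda(P)H(\lambda)^{-m/2}$ as a Fourier multiplier on $\H^n$ in the central variable $\lambda$, and to reduce the claim to the known $L^p$-boundedness of an operator on the Heisenberg group by using the Hermite semigroup together with a Littlewood--Paley / $g$-function argument, or alternatively by exhibiting the operator as a Calder\'on--Zygmund operator directly. More precisely, since $P$ is a bigraded solid harmonic of bidegree $(p_1,p_2)$ with $p_1+p_2=m$, the operator $G_\lambda(P)$ is the Weyl correspondent of $P$, and by the homogeneity of $P$ under the natural dilations one has the scaling relation $G_\lambda(P) = |\lambda|^{m/2}\, \delta_{\sqrt{|\lambda|}}\, G_{\sign\lambda}(P)\, \delta_{\sqrt{|\lambda|}}^{-1}$, matched by $H(\lambda)^{-m/2} = |\lambda|^{-m/2}\,\delta_{\sqrt{|\lambda|}} H(\pm1)^{-m/2}\delta_{\sqrt{|\lambda|}}^{-1}$. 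Hence the family $\{G_\lambda(P)H(\lambda)^{-m/2}\}_{\lambda\in\R^\ast}$ is a dilation-invariant operator-valued multiplier, and on the group side it corresponds to a convolution operator on $\H^n$ that commutes with the Heisenberg dilations and is homogeneous of the critical degree $-(2n+2)$.

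First I would record the key analytic input about $G_\lambda(P)$: writing $H(1)^{-m/2}$ via the subordination formula $H(1)^{-m/2} = c_m\int_0^\infty t^{m/2-1} e^{-tH(1)}\,dt$ and expanding $e^{-tH(1)}$ in the Hermite basis (Mehler's formula), one sees that $G_1(P)H(1)^{-m/2}$ has a kernel $K(x,y)$ that is smooth off the diagonal and satisfies the Calder\'on--Zygmund estimates $|K(x,y)|\lesssim |x-y|^{-n}$ and $|\nabla_{x,y}K(x,y)|\lesssim |x-y|^{-n-1}$, because $P$ being harmonic kills the leading singular term and improves the order by exactly $m$, compensating the $m$ derivatives hidden in $G_\lambda(P)$. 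Combined with the explicit oscillatory form of the special Hermite kernel, this gives the pointwise bounds. Next I would pass from $H(\lambda)$, $\lambda\in\R^\ast$, to $\H^n$: the operator with Weyl multiplier $G_\lambda(P)H(\lambda)^{-m/2}$ is right convolution on $\H^n$ with a distribution whose partial inverse Fourier transform in $t$ is, at frequency $\lambda$, the special-Hermite convolution kernel of $G_\lambda(P)H(\lambda)^{-m/2}$. Using the homogeneity noted above, this kernel is a smooth function away from the center $\{(0,t)\}$, homogeneous of degree $-(2n+2)$ with respect to the nonisotropic dilations $\delta_r(z,t)=(rz,r^2t)$, and with mean value zero on Heisenberg spheres; hence it is a Calder\'on--Zygmund kernel on the space of homogeneous type $\H^n$, so the convolution operator is bounded on $L^p(\H^n)$, $1<p<\infty$, and of weak type $(1,1)$.

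Finally I would transfer this back to $\R^n$. The boundedness on $L^p(\H^n)$ of the convolution operator, together with the fact that its group Fourier transform is exactly the multiplier $\lambda\mapsto G_\lambda(P)H(\lambda)^{-m/2}$, yields via the Plancherel theorem for $\H^n$ and Fubini in $\lambda$ a uniform-in-$\lambda$ bound for the operators $G_\lambda(P)H(\lambda)^{-m/2}$ on $L^p(\R^n)$; by scaling it suffices to treat $\lambda=\pm1$, and $G_{-1}(P)$ is conjugate to $G_1(\bar P)$ by the reflection $x\mapsto -x$, so one value of $\lambda$ suffices. The same argument with the weak-type bound on $\H^n$ gives the weak-type $(1,1)$ conclusion on $\R^n$, once one checks the relevant kernel is a genuine function (away from the origin) with the Euclidean Calder\'on--Zygmund estimates, which follows from the Mehler-formula computation above. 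The main obstacle I expect is the kernel estimate in the first step: verifying that the harmonicity of $P$ produces exactly the cancellation needed to bring $G_\lambda(P)H(\lambda)^{-m/2}$ down to a kernel of Calder\'on--Zygmund type (order $-n$ with the right decay and smoothness), rather than something an order too singular; this is where the structure of bigraded solid harmonics and the Weyl correspondence, as in \cite{T2} and \cite{G}, has to be used carefully, and it is the heart of the proof.
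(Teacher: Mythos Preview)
Your proposal has a genuine gap in the final transference step. You claim that $L^p(\H^n)$-boundedness of the convolution operator, together with ``the Plancherel theorem for $\H^n$ and Fubini in $\lambda$'', yields $L^p(\R^n)$-boundedness of $G_\lambda(P)H(\lambda)^{-m/2}$. This fails for $p\neq 2$: Plancherel is an $L^2$ identity, and the $L^p(\R^n)$ norm of $G_\lambda(P)H(\lambda)^{-m/2}\varphi$ is simply not visible on the group side through any Fubini argument. Passing from boundedness of a group Fourier multiplier to boundedness of the multiplier on the representation space is exactly the content of a transference theorem, and the relevant one here is Mauceri's \cite{M}: if the \emph{Weyl multiplier} $T_P$ on $\C^n$ defined by $W_\lambda(T_Pf)=W_\lambda(f)\,G_\lambda(P)H(\lambda)^{-m/2}$ is bounded on $L^p(\C^n)$, then $G_\lambda(P)H(\lambda)^{-m/2}$ is bounded on $L^p(\R^n)$. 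The paper takes precisely this route, so the detour through $\H^n$ is unnecessary.

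The paper also handles the kernel analysis differently from your first step. Rather than work on $\R^n$ with Mehler's formula and extract the cancellation in $G_\lambda(P)$ by hand --- which you correctly flag as the main obstacle --- it works on $\C^n$: by Geller's Hecke--Bochner formula for the Weyl transform one has $G(P)H^{-m/2}=c_n(p,q)\,W(PK_m)$ with an explicit radial $K_m$ built from Laguerre functions, so $T_P$ is twisted convolution with the kernel $PK_m$. This kernel is then treated as an oscillatory singular integral in the sense of Ricci--Stein \cite{RS} and Chanillo--Christ \cite{CC}, giving $L^p(\C^n)$-boundedness and weak type $(1,1)$ at once; Mauceri's transference then finishes the proof. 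In this approach the harmonicity of $P$ enters only through the applicability of the Hecke--Bochner formula, and no delicate cancellation argument on $\R^n$ is needed. Your direct Mehler approach on $\R^n$ can indeed be made to work (it is essentially what is done in \cite{T0} for first order and in \cite{ST}, \cite{HRST} for higher order), but it is a self-contained alternative, not an input to the $\H^n$ route; as written, your outline conflates the two strategies and leaves the one crucial logical link --- transference to $L^p(\R^n)$ --- unjustified.
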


This theorem can be deduced, in principle, from the known results on higher
order Riesz transforms. However, we prefer to give a painless proof which is
based  on Mauceri's transference theorem \cite{M} for Weyl multipliers. In
view of this transference, we only need to show that
$ G_\lambda(P)H(\lambda)^{-\frac{m}{2}} $ is an $ L^p $ multiplier for the
Weyl transform $ W_\lambda.$ The operator $ T_P $ defined on $ L^2(\C^n) $ by
$$ W_\lambda(T_Pf) = W_\lambda(f)G_\lambda(P)H(\lambda)^{-\frac{m}{2}} $$
turns out to be a twisted convolution operator with a singular kernel. Thanks
to the Hecke-Bochner type formula for the Weyl transform, due to Geller
\cite{G}, the
kernel can be estimated easily. By appealing to the theory of oscillatory
singular integrals developed by Ricci and Stein \cite{RS} and Chanillo and Christ
\cite{CC} we can prove

\begin{thm}For any bigraded solid harmonic $ P $ of total degree $ m $
the operator $ T_P $ defined above is bounded on
$ L^p(\C^n), 1 < p < \infty  $ and is of weak type (1,1).
\end{thm}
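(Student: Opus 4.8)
The plan is to realize $T_P$ explicitly as a twisted convolution operator and then estimate its kernel using the Hecke--Bochner identity. First I would recall that since $G_\lambda(P)H(\lambda)^{-m/2}$ is a bounded operator commuting with the relevant symmetries, the operator $T_P$ defined by $W_\lambda(T_Pf)=W_\lambda(f)G_\lambda(P)H(\lambda)^{-m/2}$ is a right Weyl multiplier, hence (by the basic correspondence between Weyl multipliers and twisted convolutions) there is a tempered distribution $k_P$ on $\C^n$ with $T_Pf = f \times_\lambda k_P$, the $\lambda$-twisted convolution. The key identification is $W_\lambda(k_P) = G_\lambda(P)H(\lambda)^{-m/2}$, so I need to understand the kernel whose Weyl transform is this operator. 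Writing $H(\lambda)^{-m/2}$ via the standard subordination formula $H(\lambda)^{-m/2} = c_m \int_0^\infty e^{-sH(\lambda)} s^{m/2}\frac{ds}{s}$ and using the explicit Mehler kernel for the Hermite semigroup, together with the Hecke--Bochner formula of Geller \cite{G} which says that $G_\lambda(P)$ applied to the Gaussian-type kernels again produces $P$ times a Gaussian (up to dimensional constants that are harmless for fixed analysis in $p$), I expect to get $k_P(z) = P(z)\, \psi(|z|)$ for an explicit radial profile $\psi$, times the oscillatory factor coming from $\lambda$.

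Next I would estimate $k_P$. The Hecke--Bochner computation should yield that away from the origin $k_P$ is smooth and satisfies Calder\'on--Zygmund-type bounds: $|k_P(z)| \lesssim |z|^{-2n}$ and $|\nabla k_P(z)| \lesssim |z|^{-2n-1}$, with the solid harmonic $P$ of degree $m$ supplying exactly the homogeneity needed so that $P(z)$ times the decay of the subordinated Mehler kernel produces a kernel homogeneous of the critical degree $-2n$ on $\C^n = \R^{2n}$ (modulo lower-order smooth corrections due to the mass term $\lambda^2|z|^2$ in $H(\lambda)$, which only improve decay at infinity). Thus $k_P$ is a Calder\'on--Zygmund kernel, except that the twisted convolution carries the oscillatory factor $e^{\frac{i\lambda}{2}\mathrm{Im}(z\cdot\bar w)}$, so $T_P$ is not an ordinary convolution operator but an oscillatory singular integral.

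The final step, and the place where I would invoke the cited machinery rather than reprove it, is to conclude $L^p$ boundedness for $1<p<\infty$ and weak type $(1,1)$. Twisted convolution with a Calder\'on--Zygmund kernel is precisely an oscillatory singular integral of the type treated by Ricci--Stein \cite{RS}: the phase $\mathrm{Im}(z\cdot\bar w)$ is a real bilinear (hence polynomial) form, and their theorem gives $L^p$ bounds, $1<p<\infty$, with constants independent of the polynomial phase; the weak type $(1,1)$ endpoint for such oscillatory integrals with Calder\'on--Zygmund kernels is supplied by Chanillo--Christ \cite{CC}. Assembling these gives the theorem. The main obstacle I anticipate is the kernel estimate: one must carry the Hecke--Bochner formula through the subordination integral carefully enough to see that the singularity of $k_P$ at the origin is exactly of Calder\'on--Zygmund type (order $-2n$ with the matching gradient bound) and that the remainder terms coming from the harmonic-oscillator potential are genuinely better-behaved, so that the kernel falls within the scope of \cite{RS} and \cite{CC}; everything after that is an appeal to those theorems.
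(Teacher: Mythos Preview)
Your proposal is correct and follows essentially the same route as the paper: identify $T_P$ as a twisted convolution with a kernel of the form $P(z)K_m(|z|)$ via Geller's Hecke--Bochner formula, estimate the radial profile through the heat (Mehler) kernel to see that $PK_m$ is a Calder\'on--Zygmund kernel, and then invoke Ricci--Stein \cite{RS} and Chanillo--Christ \cite{CC} for the oscillatory singular integral given by twisted convolution. The only detail you gloss over is that Hecke--Bochner produces Laguerre functions of shifted type $n+p+q-1$ (equivalently, the heat kernel on $\C^{n+p+q}$), so the radial factor $K_m$ lives in a higher-dimensional picture; this is exactly what makes the homogeneity work out, and you will see it as soon as you carry the formula through.
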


We now consider the Riesz transforms
$  G_\lambda(P)H(\lambda)^{-\frac{m}{2}} $
as multipliers for the (group) Fourier transform on the Heisenberg group $
\H^n $ and explain why they are natural. Recall that a Fourier multiplier for
the Heisenberg group is a family of  bounded linear operators $ M(\lambda) $
on $ L^2(\R^n) $ and the multiplier transformation $ T_M $ is defined by
$ \pi_\lambda(T_Mf) = \hat{f}(\lambda)M(\lambda) $ where $ \hat{f}(\lambda)
= \pi_\lambda(f)$ is the group Fourier transform on the Heisenberg group. Such operators are
precisely those which are given by convolution with certain kernels on $ \H^n $ and hence commute with translations. The unitary group $ U(n) $ acts on $
\H^n $ by automorphisms which leads to an action on $ L^2(\H^n).$ Let
$\CH_{p,q}$ stand for the space of bigraded solid harmonics of bidegree
$(p,q), $ which supports an irreducible unitary representation $ R(\sigma) $ of $ U(n).$
In what follows we let $ P_k(\lambda) $ stand for the orthogonal projection of
$ L^2(\R^n) $ onto the $k-$th eigenspace of $ H(\lambda).$ We also denote by $
\rho(\sigma) f $ the action of $ \sigma \in U(n) $ on functions defined on $ \H^n $,
i.e. $ \rho(\sigma) f(z,t) = f(\sigma^{-1}z,t) $ and by $ \delta_r, r > 0 $
the nonisotropic dilation $ \delta_r f(z,t) = f(rz,r^2t).$

\begin{thm} Let $ T $ be a translation invariant operator taking $ L^2(\H^n) $
into $ L^2(\H^n,\CH_{p,q})$ and let $ M(\lambda) $ be the corresponding Fourier multiplier. Assume that (i)$ R(\sigma)Tf(z,t) =
\rho(\sigma) T \rho(\sigma^{*})f(z,t) $ for every $ \sigma \in U(n) $ (ii)
$ T\delta_r f(z,t) = \delta_r Tf(z,t) $ for every $ r > 0 $ and (iii)
$ M(\lambda)P_k(\lambda) = ((2k+n)|\lambda|)^{-\frac{1}{2}(p+q)}S(\lambda)$
for some (unbounded) operator $S(\lambda).$ Then for any linear functional
$ \beta $ of $ \CH_{p,q}$ the operator $ \beta(T)f = \beta(Tf) $ is a linear
combination of the Riesz transforms with multipliers $
G_\lambda(P)H(\lambda)^{-\frac{p+q}{2}} $ as $ P $ runs through an orthonormal
basis of $ \CH_{p,q}.$
\end{thm}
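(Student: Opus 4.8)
The plan is to decompose $T$ according to the spectral decomposition of $H(\lambda)$ and then exploit the two equivariance hypotheses (i) and (ii) to pin down the Fourier multiplier $M(\lambda)$ completely on each eigenspace. First I would write $M(\lambda) = \sum_{k\geq 0} M(\lambda)P_k(\lambda)$ and use hypothesis (iii) to record that each block has the form $((2k+n)|\lambda|)^{-\frac{1}{2}(p+q)}S(\lambda)$; the job is to show $S(\lambda)$, suitably normalized, is forced to be a combination of the $W_\lambda(P)$'s with $P$ a solid harmonic of bidegree $(p,q)$. The dilation hypothesis (ii) translates into a homogeneity relation for $M(\lambda)$ under the scaling $\lambda \mapsto r^{-2}\lambda$ combined with the metaplectic conjugation coming from $\delta_r$; this should force $S(\lambda)$ to be independent of $|\lambda|$ after the normalization, so that up to scalars $S(\lambda) = S$ is a fixed operator on $L^2(\R^n)$ (or rather $|\lambda|$-homogeneous of the exact degree already extracted).

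Next I would bring in the $U(n)$-equivariance. Hypothesis (i) says that conjugating $T$ by the geometric $U(n)$-action on $\H^n$ matches the representation $R(\sigma)$ on the target $\CH_{p,q}$. Transferring this through the Fourier transform and using that $\pi_\lambda(\rho(\sigma)f) = \mu_\lambda(\sigma)\pi_\lambda(f)\mu_\lambda(\sigma)^*$ for the metaplectic representation $\mu_\lambda$, the relation becomes
$$ R(\sigma)\,M(\lambda) = \mu_\lambda(\sigma)\,M(\lambda)\,\mu_\lambda(\sigma)^* . $$
So, picking a linear functional $\beta$ on $\CH_{p,q}$, the scalar multiplier $M_\beta(\lambda) = \beta(M(\lambda))$ satisfies $M_\beta(\lambda) = M_{\beta\circ R(\sigma)}(\lambda)$ after conjugating by $\mu_\lambda(\sigma)$; equivalently the assignment $\beta \mapsto M_\beta(\lambda)$ intertwines $R$ with the conjugation action of $U(n)$ (via the metaplectic representation) on the relevant space of operators on $L^2(\R^n)$.

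The key representation-theoretic input is then Geller's Hecke–Bochner identity for the Weyl transform: the operators $W_\lambda(P)$, for $P$ ranging over an orthonormal basis of $\CH_{p,q}$, span a $U(n)$-irreducible space of operators isomorphic (as a $U(n)$-module) to $\CH_{p,q}$ under exactly the metaplectic conjugation action, and moreover $W_\lambda(P)H(\lambda)^{-\frac{p+q}{2}}$ has Fourier-multiplier block structure matching $((2k+n)|\lambda|)^{-\frac{1}{2}(p+q)}$ times a fixed $U(n)$-equivariant piece on the $k$-th eigenspace (this is where the homogeneity recorded above from (ii) and (iii) becomes the correct one). Since by Schur's lemma any $U(n)$-equivariant map from $\CH_{p,q}$ into that operator space is unique up to scalar, $S(\lambda)$ must be a scalar multiple of the operator furnished by $\sum_P \overline{\beta(P)}\,G_\lambda(P)H(\lambda)^{-\frac{p+q}{2}}$; summing back over $k$ gives $\beta(T)$ as the asserted linear combination of Riesz transforms. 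I expect the main obstacle to be the careful bookkeeping in step one: showing that hypotheses (ii) and (iii) together really do strip off all the $|\lambda|$- and $k$-dependence so that what remains is a genuinely $U(n)$-equivariant, essentially rigid object to which Schur's lemma applies — in particular, checking that the metaplectic intertwining relation for $\delta_r$ is compatible with the eigenspace normalization, and that the (unbounded) operator $S(\lambda)$ is controlled enough on each $P_k(\lambda)L^2(\R^n)$ for these identifications to be legitimate rather than merely formal.
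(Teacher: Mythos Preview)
Your route is genuinely different from the paper's. The paper argues on the convolution-kernel side, following Stein's template for $\R^n$: writing $Tf=f*k$ with $k$ a $\CH_{p,q}$-valued distribution, hypothesis (i) becomes $k(\sigma z,t)=R(\sigma)k(z,t)$, and the classical zonal-harmonic argument (the stabiliser of a unit vector $w$ fixes $k(w,t)$, hence $k(w,t)$ is a scalar multiple of the zonal harmonic $Y_w$; transitivity of $U(n)$ on the sphere makes that scalar depend only on $t$) yields $k_j(z,t)=g(|z|,t)P_j(z)$ for a single radial $g$. Hypothesis (ii) then forces the homogeneity of $g$, and only at the very end does the paper pass to the Fourier side, where Geller's Hecke--Bochner formula turns $W_\lambda(P_j g^\lambda)$ into $G_\lambda(P_j)\bigl(\sum_k c_k P_k(\lambda)\bigr)\lambda^{-(p+q)/2}$ and hypothesis (iii) fixes the residual coefficients $c_k$. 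The virtue of this order is that the kernel is completely determined \emph{before} any operator theory enters.

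Your Fourier-side argument, by contrast, has a real gap at the Schur step. You correctly derive that $\beta\mapsto M_\beta(\lambda)$ is $U(n)$-equivariant from $\CH_{p,q}^*$ into operators on $L^2(\R^n)$ under metaplectic conjugation, and that $P\mapsto G_\lambda(P)$ furnishes another such map with irreducible image. But Schur's lemma only makes two equivariant maps out of an irreducible proportional when the target itself is irreducible or has multiplicity one; the conjugation representation on operators on $L^2(\R^n)$ contains infinitely many copies of $\CH_{p,q}$ (already one for each pair of Hermite eigenspaces whose $\mathrm{Hom}$-space contains it), and hypothesis (iii) does not by itself cut this down --- it only peels off the $H(\lambda)^{-(p+q)/2}$ factor, leaving $S(\lambda)$ free to roam over the full isotypic component. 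What you actually need is the structural fact that the entire $\CH_{p,q}$-isotypic part of the operator algebra is $\{G_\lambda(P)\Phi(H(\lambda)):P\in\CH_{p,q},\ \Phi\ \text{scalar}\}$. This is true, and follows by transporting the decomposition of $L^2(\C^n)$ into (solid harmonic)$\times$(radial) through the Weyl transform (which intertwines rotation with metaplectic conjugation) and then invoking Hecke--Bochner --- but once that is in hand you have essentially reconstructed the paper's kernel-side computation in operator-theoretic language, and the ``Schur'' step becomes a restatement of the conclusion rather than a shortcut to it.
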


The above theorem is the analogue of a result of Stein for higher order Riesz
transforms on $ \R^n $, see page 79 in \cite{S1}. For variations on the same theme we
refer to the recent works \cite{KN1} and \cite{KN2} of Kobayashi and Nilsson.
The operators on $ L^2(\H^n) $ with multipliers $
G_\lambda(P)H(\lambda)^{-\frac{p+q}{2}} $ turn out to be singular integral operators. Hence we get

\begin{thm} Let $ T $ satisfy the assumptions of the previous theorem and let
$ \beta $ be a linear functional on $\CH_{p,q}.$ Then $ \beta(T) $ is bounded
on $ L^p(\H^n), 1 < p < \infty $ and is of weak type (1,1).
\end{thm}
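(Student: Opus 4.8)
The plan is to deduce the statement from the structural theorem just proved together with the Calder\'on--Zygmund theory of singular integrals on the homogeneous group $\H^n$. By the previous theorem, for any linear functional $\beta$ on $\CH_{p,q}$ the operator $\beta(T)$ is a finite linear combination of the Riesz transforms $\CR_P$ whose group Fourier multiplier is $G_\lambda(P)H(\lambda)^{-m/2}$, $m=p+q$, as $P$ runs through an orthonormal basis of $\CH_{p,q}$. Since the class of operators bounded on $L^p(\H^n)$ for $1<p<\infty$ and of weak type $(1,1)$ is closed under finite linear combinations, it suffices to prove these two mapping properties for each model operator $\CR_P$ separately.

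First I would record the structural features of $\CR_P$. Being translation invariant, $\CR_P$ is convolution with a tempered distribution $\phi_P$ on $\H^n$. The multiplier $G_\lambda(P)H(\lambda)^{-m/2}$ is invariant under the $|\lambda|$-dilation conjugation of $L^2(\R^n)$ (under which $H(\lambda)$ scales by $r^2$ and $G_\lambda(P)$, $P$ of degree $m$, by $r^m$, so that the two factors cancel); equivalently $\CR_P$ is closely related to the degree-zero left-invariant operator ``$P(Z,\bar Z)\CL^{-m/2}$'', using Geller's identities $G_\lambda(z^\alpha)=A(\lambda)^\alpha$, $G_\lambda(\bar z^\alpha)=A^*(\lambda)^\alpha$ \cite{G} and their extension to bigraded solid harmonics. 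Consequently $\CR_P$ commutes with the dilations $\delta_r$ and $\phi_P$ is homogeneous of degree $-Q$, where $Q=2n+2$ is the homogeneous dimension of $\H^n$. Moreover, by the same scaling the operator norm of $G_\lambda(P)H(\lambda)^{-m/2}$ equals its value at $\lambda=\pm1$, which is finite by Geller's Hecke--Bochner formula (see also \cite{T2}); hence $\CR_P$ is bounded on $L^2(\H^n)$ by the Plancherel theorem, and this bound encodes the cancellation property of $\phi_P$.

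The real obstacle is the regularity of $\phi_P$ away from the origin together with the associated size and smoothness estimates. Here I would pass to the partial Fourier transform in the central variable, which turns convolution on $\H^n$ into $\lambda$-twisted convolution on $\C^n$: for $\lambda\neq0$ the operator $\CR_P$ corresponds to the $\lambda$-twisted convolution operator $T_P^\lambda$, whose kernel $k_\lambda$ is, up to normalization and a unimodular factor, the $|\lambda|^{1/2}$-dilate of the kernel $k_1$ of the operator $T_P$ treated in the theorem on $T_P$ (the case $\lambda=1$). That theorem, via the Hecke--Bochner formula and the oscillatory singular integral estimates of Ricci--Stein \cite{RS} and Chanillo--Christ \cite{CC}, exhibits $k_1$ as a kernel that is $C^\infty$ away from $z=0$, of size $|z|^{-2n}$ there, and rapidly decaying at infinity. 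Plugging the dilation relation into the inversion formula $\phi_P(z,t)=c\int_{\R}k_\lambda(z)e^{-i\lambda t}\,d\lambda$ and using the decay of $k_1$ (for $z\neq0$) together with integration by parts in $\lambda$ (for $z=0$, $t\neq0$), one checks that the integral converges for $(z,t)\neq(0,0)$, is smooth there, and obeys the homogeneous Calder\'on--Zygmund estimates $|X^I\phi_P(z,t)|\lesssim|(z,t)|^{-Q-|I|}$ for left-invariant monomials $X^I$ in $T,X_j,Y_j$. A less computational alternative is to note that $\CL$ is positive and hypoelliptic with a smooth heat kernel $p_s$, so the Riesz potential kernel $c\int_0^\infty s^{m/2-1}p_s\,ds$ of $\CL^{-m/2}$ is smooth off the origin, and applying the homogeneous degree-$m$ left-invariant operator attached to $P$ keeps it smooth there while reducing its homogeneity to the critical value $-Q$.

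Once $\phi_P$ is identified as a Calder\'on--Zygmund convolution kernel on the homogeneous group $\H^n$ --- smooth off the origin, homogeneous of degree $-Q$, with $\CR_P$ bounded on $L^2(\H^n)$ --- the Coifman--Weiss / Folland--Stein theory of singular integrals on homogeneous Lie groups gives the $L^p$ boundedness for $1<p<\infty$ and the weak type $(1,1)$ estimate for $\CR_P$ (the weak $(1,1)$ bound being exactly where the full Calder\'on--Zygmund decomposition on the space of homogeneous type $\H^n$ is used). Summing the finitely many contributions coming from an orthonormal basis of $\CH_{p,q}$ then yields the stated bounds for $\beta(T)$.
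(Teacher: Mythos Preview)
Your argument is correct and follows essentially the same route as the paper: reduce $\beta(T)$ via the structural theorem to a finite linear combination of the model operators $R_P$, then recognise each $R_P$ as a Calder\'on--Zygmund singular integral on the homogeneous group $\H^n$ (convolution with a kernel smooth off the origin and homogeneous of degree $-Q=-(2n+2)$), and invoke the Coifman--Weiss/Folland--Stein theory.

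The only real difference is in how much is written out. The paper's own proof of this theorem is extremely terse: it simply asserts that the $R_P$ ``turn out to be singular integral operators'' and cites Folland~\cite{F}, Lohou\'e--Varopoulos~\cite{LV}, and ter~Elst--Robinson--Sikora~\cite{ERS} for the $L^p$ and weak $(1,1)$ bounds. In Section~3 it adds one extra observation you do not use: via the metaplectic relation $\mu_\lambda(\sigma)G_\lambda(P)\mu_\lambda(\sigma)^*=G_\lambda(\rho(\sigma)P)$ (Proposition~3.1), the operators $R_P$ and $R_{\rho(\sigma)P}$ have the same $L^p$ norm, and since every $P\in\CH_{p,q}$ is a finite linear combination of $U(n)$-translates of the monomial $P_0(z)=z_j^p\bar z_k^q$, it suffices to know the boundedness of $R_{P_0}$, which is classical. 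Your heat-kernel/homogeneity argument instead verifies the CZ kernel estimates for $R_P$ directly for arbitrary $P$; this is more self-contained and is in effect what the cited references do, whereas the paper's $U(n)$-reduction buys nothing for Theorem~1.5 itself but is the starting point for the dimension-free estimates of Theorem~1.6.
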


Given $ P \in \CH_{p,q} $ let $ R_P $ stand for the (higher order) Riesz
transform with multiplier $ G_\lambda(P)H(\lambda)^{-\frac{p+q}{2}}.$
The boundedness of these higher order Riesz transforms on  $ L^p(\H^n) $
is well known, see e.g. the works of Folland \cite{F}, Lohou\'e and Varopoulos
\cite{LV} and Ter Elst et al. \cite{ERS}. However, dimension-free estimates
are not known and we conjecture that such estimates are true. Here we prove
such estimates under some assumptions on $ P.$ Indeed, we let $ P_0(z) =
z_j^p\bar{z_k}^q , j \neq k $ and denote by  $ O(P_0) $ the orbit of $ P_0 $
under the action of $ U(n).$ We then have

\begin{thm} For every $ P \in O(P_0) $ the Riesz transform $ R_P $ satisfies
the estimate $ \|R_Pf\|_p \leq C_p \|f\|_p $  on $ L^p(\H^n), 1 < p < \infty$
where $ C_p $ is independent of the dimension $ n $ and 
$ P  \in O(P_0) .$
\end{thm}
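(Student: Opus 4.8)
The plan is to reduce the dimension-free bound for $R_P$, $P \in O(P_0)$, to a dimension-free vector-valued inequality for the twisted convolution operators $T_P$ on $L^p(\C^n)$, and then to a scalar inequality involving only the Riesz transforms in two complex variables. First I would record that, since $R_P$ is a Fourier multiplier for $\H^n$ with multiplier $G_\lambda(P)H(\lambda)^{-m/2}$ (here $m = p+q$), Mauceri's transference theorem reduces the $L^p(\H^n)$ bound to the corresponding bound for the twisted convolution operator $T_P$ on $L^p(\C^n)$ associated to the Weyl multiplier $G_\lambda(P)H(\lambda)^{-m/2}$, uniformly in $\lambda$; after the parabolic scaling $\delta_r$ one may fix $\lambda = 1$ and write $G(P) = G_1(P)$, $H = H(1)$. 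So the task becomes: show $\|T_{P}f\|_{L^p(\C^n)} \le C_p\|f\|_p$ with $C_p$ independent of $n$ and of $P \in O(P_0)$.

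\smallskip

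The next step exploits the special structure of $P_0(z) = z_j^{\,p}\bar z_k^{\,q}$ with $j \ne k$. By Geller's Hecke--Bochner identity (used already for Theorem 1.3), $G(P_0) = A_j^{\,p}(A_k^*)^{q}$, a product of creation/annihilation operators in the $j$-th and $k$-th variables only. Because these two sets of variables are disjoint, $H$ factors as $H = H_j + H_k + H'$ where $H'$ is the Hermite operator in the remaining $n-2$ variables, and $A_j, A_j^*$ commute with $H_k$ and $H'$ while $A_k, A_k^*$ commute with $H_j$ and $H'$. Thus the Weyl multiplier $G(P_0)H^{-m/2}$, although not literally a tensor product, is a function of the mutually commuting operators $H_j, H_k, H'$ together with $A_j^{\,p}, (A_k^*)^q$, so that the twisted convolution kernel of $T_{P_0}$ factors as a product of a kernel on $\C^2$ (in the $z_j, z_k$ variables) and the heat-type kernel $e^{-sH'}$ integrated against a one-dimensional weight. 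The point I want to extract is that $T_{P_0}$ can be written as an average over $s > 0$ of operators of the form $(\text{operator on } \C^2_{j,k}) \otimes (\text{twisted convolution by a Gaussian on } \C^{n-2})$, where the Gaussian part is a contraction on every $L^p$ uniformly in $n$, and the $\C^2$-part is a fixed operator (independent of $n$) that is bounded on $L^p(\C^2)$ by Theorem 1.3. Integrating in $s$ against an $L^1$ density then gives a bound on $T_{P_0}$ that does not see $n$.

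\smallskip

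The final step handles the whole orbit $O(P_0)$. For $\sigma \in U(n)$, the operator $R_{\rho(\sigma)P_0}$ is intertwined with $R_{P_0}$ by the $U(n)$-action on $L^p(\H^n)$, which is isometric on each $L^p$; hence $\|R_{\rho(\sigma)P_0}\|_{p\to p} = \|R_{P_0}\|_{p\to p}$, and the constant obtained above is automatically uniform over $P \in O(P_0)$. (Equivalently one checks that $T_{\rho(\sigma)P_0} = \mu(\sigma)^{-1}T_{P_0}\mu(\sigma)$ for the metaplectic-type action $\mu(\sigma)$ on $L^p(\C^n)$ coming from the $U(n)$-covariance, and that this action is $L^p$-bounded with norm independent of $n$, using that $U(n)$ acts by measure-preserving linear maps on $\C^n$.) The weak-type and $1<p<\infty$ boundedness for individual $n$ is already contained in Theorem 1.5; what is new here is only the uniformity in $n$, which the tensor-decomposition argument delivers.

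\smallskip

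\textbf{Main obstacle.} The crux is making the decomposition $G(P_0)H^{-m/2} = \int_0^\infty (\cdots)_{\C^2}\otimes e^{-sH'}\,d\mu(s)$ precise and controlling the $\C^2$-factor uniformly: one must check that after peeling off the $(n-2)$-dimensional Gaussian the remaining kernel on $\C^2$ is exactly (a constant multiple of, independent of $n$) the $T_{P_0}$-kernel for $n=2$, so that Theorem 1.3 in dimension $2$ applies with a fixed constant. Writing $H^{-m/2} = c_m\int_0^\infty s^{m/2 - 1} e^{-sH}\,ds$ and factoring $e^{-sH} = e^{-sH_j}e^{-sH_k}e^{-sH'}$ via Mehler's formula, the subtlety is that the $s$-integral does not split across the tensor factors; I expect to handle this by a further change of variables or by estimating the resulting kernel directly via the Geller kernel formula, showing it is dominated by a product of a fixed Calderón--Zygmund kernel on $\C^2$ and a probability density in the $\C^{n-2}$ variables — this domination, rather than an exact factorization, is what makes the dimension-free bound go through.
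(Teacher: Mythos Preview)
Your proposal has a real gap at the step you flag as the ``main obstacle.'' After writing $H^{-m/2}=c_m\int_0^\infty s^{m/2-1}e^{-sH}\,ds$ and factoring $e^{-sH}=e^{-sH_{j,k}}\otimes e^{-sH'}$, the operator becomes
\[
\int_0^\infty s^{m/2-1}\bigl(A_j^{\,p}(A_k^*)^{q}e^{-sH_{j,k}}\bigr)\otimes e^{-sH'}\,ds .
\]
The factor $e^{-sH'}$ is indeed an $L^p$-contraction for every $s$, but the $\C^2$-piece $A_j^{\,p}(A_k^*)^{q}e^{-sH_{j,k}}$ has operator norm of order $s^{-m/2}$ as $s\to 0$, so $\int_0^\infty s^{m/2-1}\|\cdots\|\,ds$ diverges and no triangle-inequality argument can close. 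Your fallback, pointwise domination of the kernel by $(\text{CZ kernel on }\C^2)\times(\text{probability density on }\C^{n-2})$, does not rescue this: domination of a singular kernel by a non-integrable function gives no $L^p$ information, since Calder\'on--Zygmund boundedness comes from cancellation, not size. A secondary issue is the transference: Mauceri's theorem passes from Weyl multipliers on $L^p(\C^n)$ to Hermite multipliers on $L^p(\R^n)$, not to Fourier multipliers on $L^p(\H^n)$; a uniform bound $\sup_\lambda\|T_P^\lambda\|_{p\to p}<\infty$ yields the $L^2(\H^n)$ estimate but not the $L^p(\H^n)$ estimate without further work.

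The paper avoids both problems by never attempting a tensor splitting and working directly with the multiplier on $\H^n$. Your $U(n)$-orbit reduction is exactly Proposition~3.1. For $P_0(z)=z_1^{\,p}\bar z_2^{\,q}$ (say $q\ge p$) one has $G_\lambda(P_0)=A_2(\lambda)^qA_1^*(\lambda)^p$, and since $A_2(\lambda)A_1^*(\lambda)$ commutes with $H(\lambda)$ the multiplier factors as
\[
A_2(\lambda)^{q-p}H(\lambda)^{-\frac{q-p}{2}}\cdot\bigl(H(\lambda)^{-1/2}A_2(\lambda)\,A_1^*(\lambda)H(\lambda)^{-1/2}\bigr)^{p}.
\]
The second factor is a $p$-th power of a product of two first-order Riesz transforms, so the dimension-free bounds of Coulhon--M\"uller--Zienkiewicz apply. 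For the first factor $A_2(\lambda)^mH(\lambda)^{-m/2}$ an induction using $H(\lambda)A_j(\lambda)=A_j(\lambda)H(\lambda)+4\lambda A_j(\lambda)$ reduces everything to the single multiplier $\lambda H(\lambda)^{-1}$, i.e.\ the operator $\partial_t\CL^{-1}$ on $\H^n$. This is convolution with the $t$-derivative of Folland's fundamental solution $c_n(|z|^4+t^2)^{-n/2}$, and an explicit beta-integral computation (Lemma~3.4) shows its $L^p$ operator norm is bounded independently of $n$. That algebraic reduction to first-order Riesz transforms plus one explicit scalar operator is the missing idea.
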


We actually show that for any solid harmonic $ P$  and $ \sigma \in U(n) $ 
the operators $R_P$ and $ R_{\rho(\sigma)P} $ have the same norm on 
$ L^p(\H^n).$ The theorem is then proved by showing that $ R_{P_0} $ 
satisfies dimension-free bounds on $ L^p(\H^n).$ We prove this using known 
estimates for
first order Riesz transforms and a dimension-free estimate for a particular
singular integral operator on the Heisenberg group, see Section 3.

A slight strengthening of the above theorem is possible. ( We are thankful 
to the referee for pointing this out.) Given a solid harmonic 
$ P $ on $ \C^n $ 
we can also consider it as a solid harmonic on $ \C^m $ for any $ m \geq n $ 
which depends only on the first $n$ variables. Hence we can define Riesz 
transforms $ R_P^m $ acting on $ L^p(\H^m).$ As $ U(n) $ can be considered as 
a subgroup of $ U(m) $ in the obvious way, the operator norm of $ R_P^m $ on 
$ L^p(\H^m) $ is the same as that of $ R_{P_0}^m $ which is independent of 
$m.$ Thus $ R_P^m :L^p(\H^m) \rightarrow L^p(\H^m) $ satisfies estimates 
which are independent of $ m \geq n \geq 2  $ and $ P \in O(P_0).$

By considering functions on $ \C^n $ which are homogeneous we show that
the Riesz transforms $ T_{z_j} $ and $ T_{\bar{z_j}} $ are related to
Riesz transforms for multiple Laguerre expansions studied by Nowak and
Stempak \cite{NS1}. We prove that the boundedness of these
Laguerre-Riesz  transforms are equivalent to certain weighted norm
inequalities for $ T_{z_j} $ and $ T_{\bar{z_j}} $ on $ L^p(\C^n).$

\section{Riesz transforms on the reduced Heisenberg group}
\setcounter{equation}{0}

In this section we prove Theorem 1.1. As we mentioned earlier this will be 
done by using a transference theorem which we now describe. Let the special 
Hermite operators $ L_\lambda $ be defined by the equation $ \CL(e^{i\lambda t}
f(z)) = e^{i\lambda t}L_\lambda f(z).$ Then it is known that
$$ L_\lambda = -\Delta+\lambda^2 |z|^2+i\lambda N $$ 
where $ \Delta $ is the Laplacian on $ \C^n $ and 
$$  N=\sum^n_{j=1} \left(x_i\frac{\partial}{\partial y_j}-y_i 
\frac{\partial}{\partial x_j}\right).$$
We can also write $ L_\lambda $  as 
$$ L_\lambda = -\frac{1}{2}\sum_{j=1}^n \bigg(Z_j(\lambda)\bar{Z}_j(\lambda)+
 \bar{Z}_j(\lambda)Z_j(\lambda)\bigg) $$
where $ Z_j(e^{i\lambda t}f(z))= e^{i\lambda t}Z_j(\lambda)f(z) $ and $ 
 \bar{Z_j}(e^{i\lambda t}f(z))= e^{i\lambda t}\bar{Z_j}(\lambda)f(z).$ In 
view of this the Riesz transforms on the Heisenberg group can be written as 
$$ R_jf(z,t) = \frac{1}{2\pi} \int_{-\infty}^\infty e^{-i\lambda t} 
R_j(\lambda)f^\lambda(z) 
d\lambda $$
where $ R_j(\lambda) = Z_j(\lambda)L_\lambda^{-1/2} .$
We have a similar expression for $ \bar{R_j}.$  

Therefore, we can view the Riesz transforms as operator valued 
multipliers for the (Euclidean) Fourier transform on $ \R.$ Indeed, if we let 
$ X = L^p(\C^n) $ then $ L^p(\H^n) $ can be identified with $ L^p(\R,X)$ and 
$ R_j $ are multiplier transforms corresponding to the (operator valued) 
multipliers $ R_j(\lambda).$ Using the same notation for Riesz transforms on $
\H^n/\Gamma $ they are given by
$$ R_jf(z,t) = \frac{1}{2\pi} \sum_{k=-\infty}^\infty e^{-i k t} 
R_j(k)f^k(z).$$ Thus the Riesz transforms for $\H^n/\Gamma $ are multipliers 
for the Fourier series acting on $ L^p(S^1,X).$ In order to treat the vector 
of the Riesz transforms we set
$$ \CR f =(R_1f,R_2f,...,R_nf,\bar{R_1}f,\bar{R_2}f,...,\bar{R_n}f) $$
and introduce the Banach space $ Y = L^p(\C^n)\times L^p(\C^n).....\times 
L^p(\C^n) $ ($2n$ times) equipped with the norm
$$ \|f\|_Y = \| (\sum_{j =1}^{2n}|f_j|^2)^{1/2}\|_p,~~~~ 
f = (f_1,......,f_{2n}).$$ Then the  result of \cite{CMZ} can be written as 
$$ \|\CR f\|_{L^p(\R,Y)} \leq C \|f\|_{L^p(\R,X)} $$ 
where $ C $ is independent of the dimension. We are interested in showing 
that a similar estimate holds when $ L^p(\R,X) $ and $L^p(\R,Y) $ are 
replaced by $ L^p(S^1,X)$ and $ L^p(S^1,Y)$ respectively. All we need in 
order to prove this is the following transference result.

Denote by $ (L^p(\R,X),L^p(\R,Y)) $ the set of all Fourier multipliers taking 
$L^p(\R,X)$ into $ L^p(\R,Y).$ Let $ T \in (L^p(\R,X),L^p(\R,Y)) $ and let 
$ m $ be  the corresponding multiplier so that 
$$ Tf(t) = \frac{1}{2\pi} \int_{-\infty}^\infty e^{-i\lambda t} m(\lambda)
\hat{f}(\lambda) d\lambda .$$ We can define a periodised operator $ \tilde{T} $ by 
$$ \tilde{T}f(t) = \frac{1}{2\pi} \sum_{k=-\infty}^\infty e^{-ikt}m(k)
\hat{f}(k)$$
for every $ f \in L^p(S^1,X).$

\begin{thm}  Suppose $ 1 \leq p \leq \infty $ and $ T \in 
(L^p(\R,X),L^p(\R,Y)) $ where $ X $ and $ Y $ are Banach spaces. Let $ m $ be 
the multiplier corresponding to $ T $ and assume that $ m $ is continuous at 
every point of $ \Z, $ the set of all integers. Then the periodised operator 
$\tilde{T} $ belongs to $ (L^p(S^1,X),L^p(S^1,Y)) $ and satisfies 
$ \|\tilde{T}\| \leq \|T\|.$
\end{thm}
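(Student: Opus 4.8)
The plan is to establish this by the classical de Leeuw-type dilation argument, adapted to the vector-valued setting. Since trigonometric polynomials with coefficients in $X$ are dense in $L^p(S^1,X)$ for $1\le p<\infty$, and since $\tilde{T}$ acts on such an $f(t)=\sum_{|k|\le N}a_ke^{-ikt}$, $a_k\in X$, by the explicit finite sum $\tilde{T}f(t)=\sum_{|k|\le N}m(k)a_k e^{-ikt}$, it suffices to prove
$$ \|\tilde{T}f\|_{L^p(S^1,Y)}\le\|T\|\,\|f\|_{L^p(S^1,X)} $$
for every such $f$; the case $p=\infty$ is handled in the same way with suprema replacing $L^p$ averages. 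Throughout we use that a Fourier multiplier satisfies $\|m(\lambda)\|\le\|T\|$ for a.e.\ $\lambda$, which follows by testing $T$ against highly concentrated modulated bumps.

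Fix $\psi\in C_c^\infty(\R)$ with $\psi\ge0$, $\psi(0)=1=\|\psi\|_\infty$, supported in $[-R,R]$, and for $\eps>0$ put $F_\eps(t)=\psi(\eps t)f(t)\in L^p(\R,X)$. A direct computation gives $\widehat{F_\eps}(\lambda)=\sum_{|k|\le N}\eps^{-1}\widehat{\psi}\big((\lambda-k)/\eps\big)a_k\in L^1(\R,X)$, so both $F_\eps$ and $TF_\eps$ are represented by genuine (bounded, continuous) integrals. Substituting $\lambda=k+\eps\mu$ and using Fourier inversion for $\psi$ one obtains the identity
$$ TF_\eps(t)=\psi(\eps t)\,\tilde{T}f(t)+R_\eps(t),\qquad R_\eps(t)=\frac{1}{2\pi}\sum_{|k|\le N}e^{-ikt}\!\int_{\R} e^{-i\eps\mu t}\,[m(k+\eps\mu)-m(k)]\,\widehat{\psi}(\mu)\,a_k\,d\mu. $$
Splitting the $\mu$-integral into $|\mu|\le M$ and $|\mu|>M$, using $\|m\|\le\|T\|$ to control the tail (uniformly in $\eps$ and $t$, since $\widehat{\psi}$ is Schwartz) and the continuity of $m$ at the finitely many integers $k$ to control the main part, one shows $\rho_\eps:=\|R_\eps\|_{L^\infty(\R,Y)}\to0$ as $\eps\to0$. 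This uniform estimate on $R_\eps$ is the only delicate point; note that we deliberately avoid needing an $L^p$ bound on $R_\eps$, which would require smoothness of $m$ that is not assumed.

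It remains to take norms and pass to the limit. From the boundedness of $T$, $\eps\,\|TF_\eps\|_{L^p(\R,Y)}^p\le\|T\|^p\,\eps\,\|F_\eps\|_{L^p(\R,X)}^p$; and since $\|f(\cdot)\|_X$ is $2\pi$-periodic, a Riemann-sum argument gives $\eps\,\|F_\eps\|_{L^p(\R,X)}^p\to\big(\int_\R|\psi|^p\big)\,\|f\|_{L^p(S^1,X)}^p$. For the lower bound, restrict the integration defining $\|TF_\eps\|_{L^p(\R,Y)}^p$ to the window $|t|\le A/\eps$ with $0<A<R$, use $\|TF_\eps(t)\|_Y\ge\|\psi(\eps t)\tilde{T}f(t)\|_Y-\rho_\eps$ there, change variables $t=s/\eps$, and apply the same periodic-averaging to $\|\psi(s)\tilde{T}f(s/\eps)\|_Y$; letting $\eps\to0$ and then $A\uparrow R$ yields $\liminf_{\eps\to0}\eps\,\|TF_\eps\|_{L^p(\R,Y)}^p\ge\big(\int_\R|\psi|^p\big)\,\|\tilde{T}f\|_{L^p(S^1,Y)}^p$. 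Combining the two estimates and cancelling the factor $\int_\R|\psi|^p>0$ gives the desired inequality for trigonometric polynomials, and density completes the proof for $1\le p<\infty$; the case $p=\infty$ follows by the analogous argument with suprema. The main obstacle in carrying this out is the uniform smallness of the error term $R_\eps$; everything else is routine bookkeeping with the dilation $t\mapsto\eps t$ and periodic averaging.
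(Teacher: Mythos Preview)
Your proposal is correct and is exactly the approach the paper has in mind: the paper gives no self-contained argument but cites the scalar de Leeuw theorem in Stein--Weiss (Chapter~VII, Theorem~3.8) and asserts that ``the proof given there can be easily modified'' to the vector-valued setting, and what you have written is precisely that modification---the classical dilation argument with $F_\eps(t)=\psi(\eps t)f(t)$, the uniform $L^\infty$ control of the remainder $R_\eps$ via continuity of $m$ at the integers together with the a.e.\ bound $\|m(\lambda)\|_{X\to Y}\le\|T\|$, and the periodic-averaging limit. One small point to tighten: in the lower-bound step it is cleaner to use Minkowski on the window $|t|\le A/\eps$ (i.e., $\|\psi(\eps\cdot)\tilde Tf\|_p\le\|TF_\eps\|_p+\|R_\eps\|_p$) rather than the pointwise inequality raised to the $p$-th power, and for $p=\infty$ trigonometric polynomials are not norm-dense in $L^\infty(S^1,X)$, so that endpoint should be handled by duality with $p=1$ (or by the observation that $(L^\infty,L^\infty)$ multipliers are convolution by operator-valued measures of finite variation, which periodise with the same norm); both adjustments are standard and do not affect the structure of your argument.
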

 
When $ X = Y = \C $ this theorem is due to de Leeuw and a proof can be 
found in Stein-Weiss \cite{SW}, see 
Theorem 3.8, Chapter VII. The proof given there can be easily modified to 
yield the above 
version of transference. We leave the details to the interested reader. Once 
we have the above theorem it is clear that Theorem 1.1 follows from the 
corresponding result for Riesz transforms on $ \H^n $ proved in \cite{CMZ}.
We can also treat higher order Riesz transforms: for every 
$ P \in \CH_{p,q} $ let us define
$$ \tilde{R}_Pf(z,t) = \frac{1}{2\pi}\sum_{k=-\infty}^\infty e^{-i k t}
T_Pf^k(z) $$ where $ T_P $ is as in Theorem 1.3. Then

\begin{thm} The operators $ \tilde{R}_P $ defined above are all bounded on 
$ L^p(\H^n/\Gamma) $ for $ 1 < p < \infty.$
\end{thm}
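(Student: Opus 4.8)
The plan is to combine the transference theorem (Theorem 2.1 of this section) with the $ L^p $ boundedness of the twisted convolution operators $ T_P $ on $ \C^n $ established in Theorem 1.3. The key observation is that, exactly as was done for the first order Riesz transforms $ R_j $, the higher order Riesz transform $ R_P $ on $ \H^n $ can be realised as an operator valued Fourier multiplier for the Euclidean Fourier transform on $ \R $. Concretely, writing $ f^\lambda(z) = \int_{\R} f(z,t) e^{i\lambda t}\, dt $ for the partial Fourier transform in the central variable, one has
$$ R_Pf(z,t) = \frac{1}{2\pi}\int_{-\infty}^\infty e^{-i\lambda t} T_P^\lambda f^\lambda(z)\, d\lambda, $$
where $ T_P^\lambda $ is the $ \lambda $-scaled twisted convolution operator whose Weyl transform equals $ W_\lambda(f)G_\lambda(P)H(\lambda)^{-\frac{p+q}{2}} $; for $ \lambda = 1 $ this is precisely the operator $ T_P $ of Theorem 1.3, and a routine dilation argument relates $ T_P^\lambda $ to $ T_P $ with a norm that is uniform in $ \lambda $. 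Thus, with $ X = L^p(\C^n) $, the operator $ R_P $ is the Fourier multiplier transform on $ L^p(\R,X) $ associated with the operator valued multiplier $ \lambda \mapsto T_P^\lambda $, and $ \tilde{R}_P $ is precisely its periodisation in the sense preceding Theorem 2.1.

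The steps I would carry out are: first, record the spectral/Weyl-transform identity showing that $ R_P $ indeed has $ \lambda \mapsto T_P^\lambda $ as its multiplier, using the definitions of $ G_\lambda(P) $ and $ W_\lambda $ recalled in Section 3 and the intertwining of $ \CL $ with $ L_\lambda $ that was already used above for the $ R_j $. Second, invoke Theorem 1.3 (together with the dilation scaling) to conclude that $ T_P^\lambda : L^p(\C^n)\to L^p(\C^n) $ is bounded with a bound independent of $ \lambda $, so that $ \lambda \mapsto T_P^\lambda $ is a bona fide element of $ (L^p(\R,X),L^p(\R,X)) $. Third, check the hypothesis of Theorem 2.1, namely that the multiplier $ \lambda \mapsto T_P^\lambda $ is continuous (in the relevant operator topology) at every integer; this follows from the explicit form of the kernel of $ T_P^\lambda $ and its smooth dependence on $ \lambda $ away from $ \lambda = 0 $, and since the integers relevant to the periodisation are the nonzero ones $ k \in \Z $ (the value at $ 0 $ being irrelevant, or handled by the fact that $ f^0 $ contributes nothing after periodisation in the reduced group), continuity holds where needed. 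Fourth, apply Theorem 2.1 to conclude $ \tilde{R}_P \in (L^p(S^1,X),L^p(S^1,X)) $, which is exactly the assertion that $ \tilde{R}_P $ is bounded on $ L^p(\H^n/\Gamma) $.

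The main obstacle I expect is the continuity-at-integers hypothesis of Theorem 2.1, which in the operator valued setting requires a little care: one must specify the topology (strong operator topology on $ L^p(\C^n) $ suffices for the de Leeuw type argument) and verify that $ T_P^\lambda \to T_P^{\lambda_0} $ strongly as $ \lambda \to \lambda_0 $ for $ \lambda_0 = k \neq 0 $. This is where the explicit kernel estimates coming from Geller's Hecke--Bochner formula — the same estimates that powered Theorem 1.3 — do the work, since they exhibit the kernel of $ T_P^\lambda $ as $ |\lambda|^{n+\frac{p+q}{2}} $ times a fixed Calderón--Zygmund kernel evaluated at $ \sqrt{|\lambda|}\, z $, times an oscillatory factor, all depending continuously on $ \lambda \neq 0 $. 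The remaining points — the dilation scaling of $ T_P^\lambda $ and the identification $ L^p(\H^n/\Gamma) \cong L^p(S^1,L^p(\C^n)) $ with $ \tilde{R}_P $ as the periodised multiplier — are routine and parallel the treatment of $ R_j $ given earlier in this section.
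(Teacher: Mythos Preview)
Your overall architecture is correct---transference via Theorem~2.1 is exactly how the paper intends Theorem~2.2 to follow---but your second step contains a genuine gap. You write that the uniform-in-$\lambda$ boundedness of $T_P^\lambda$ on $L^p(\C^n)$ (from Theorem~1.3 plus dilation) makes $\lambda \mapsto T_P^\lambda$ ``a bona fide element of $(L^p(\R,X),L^p(\R,X))$.'' That inference is false in general: for $p\neq 2$ an operator-valued symbol whose values are uniformly bounded on $X$ need not define a bounded Fourier multiplier on $L^p(\R,X)$. Membership in $(L^p(\R,X),L^p(\R,X))$ is precisely the statement that the associated convolution operator on $\H^n$---namely $R_P$ itself---is bounded on $L^p(\H^n)$, and that is a separate theorem, not a consequence of the pointwise bounds on the symbol.

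The paper's (implicit) argument supplies the correct input: one first knows that $R_P$ is bounded on $L^p(\H^n)$ (this is the content of Theorem~1.5 and the surrounding discussion---$R_P$ is a Calder\'on--Zygmund singular integral on the group), and \emph{that} is the hypothesis $T\in(L^p(\R,X),L^p(\R,X))$ needed in Theorem~2.1. The transference then yields $\tilde{R}_P\in(L^p(S^1,X),L^p(S^1,X))$, exactly as was done for the first order $R_j$ using the Coulhon--M\"uller--Zienkiewicz result on $\H^n$. So the fix is simple: replace your appeal to Theorem~1.3 by an appeal to the $L^p(\H^n)$ boundedness of $R_P$. Your other steps (identification of the multiplier, continuity at nonzero integers, the periodisation) are fine and parallel the first-order case.
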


The first order Riesz transforms satisfy dimension-free bounds and we expect 
the same for higher order Riesz  transforms. This is an interesting open problem even in the case of $ \H^n.$ Our partial result, Theorem 1.6, naturally 
has an analogue for $ \H^n/\Gamma.$

\section{ Higher order Riesz transforms on the Heisenberg group}
\setcounter{equation}{0}

In this section we prove Theorems 1.4, 1.5 and 1.6 stated in the introduction.
The statement and the proof of Theorem 1.4 are both inspired by
the corresponding result of Stein for the
Euclidean case stated in \cite{S1} (see Chapter III, Section 4.8, page 79).
For the proof we need some facts about
Weyl transform $ W_\lambda $ and Weyl correspondence $ G_\lambda.$ We closely
follow the notations from the monograph \cite{T2} to which we refer for any
unexplained terminology. For each non-zero real $ \lambda $ the  Weyl
transform  $ W_\lambda(f) $ of a function on $ \C^n $ is defined as the
operator
$$ W_\lambda(f) = \int_{\C^n} f(z) \pi_\lambda(z,0)~~ dz $$ where
$ \pi_\lambda $ is the Schr\"{o}dinger representation of $ \H^n $ with
parameter $ \lambda. $ The Weyl correspondence is then defined by
$ G_\lambda(f) = W_\lambda(\CF_\lambda(f)) $ where $ \CF_\lambda(f) $ is
the symplectic Fourier transform of $ f .$ As in the
introduction we denote by $\CH_{p,q} $ the space of bi-graded spherical
harmonics of bi-degree $ (p,q).$

We now recall the Hecke-Bochner formula for the Weyl transform proved
by Geller \cite{G}. We remark that the notation we use here is different from
that of Geller as we follow \cite{T2}. When $ f $ is radial and $ P $ is a
bigraded solid harmonic of bidegree $ (p,q) $ we have (for
$ \lambda > 0$)
$$ W_\lambda(Pf) = G_\lambda(P)\left( \sum_{k=p}^\infty R_{k-p}^\lambda(f)
P_k(\lambda) \right) $$
where $ P_k(\lambda) $ are the projections associated to the Hermite operator
$ H(\lambda) $ and
$$ R_k^\lambda(f) = \frac{\Gamma(k-p+1)\Gamma(n)}{\Gamma(k+q+n)} \int_{\C^{n+p+q}} f(|z|)
\varphi_{k,\lambda}^{n+p+q-1}(z) dz.$$
In the above formula
$$ \varphi_{k,\lambda}^{n+p+q-1}(z) = L_k^{n+p+q-1}\left(\frac{1}{2}\lambda|z|^2\right)
e^{-\frac{1}{4}\lambda|z|^2} $$ are the Laguerre functions of type
$ (n+p+q-1).$ We refer to \cite{T2} (Theorem 2.6.2) and \cite{G}
(Theorem 4.2) for more details.

We now begin with a proof of Theorem 1.4. Since $T$ is a translation invariant
operator taking $ L^2(\H^n) $ into $ L^2(\H^n,\CH_{p,q})$ it is a convolution
operator with a distributional kernel  $k(z,t)$ taking values in
$\CH_{p,q} $: $ Tf(z,t) =
f*k(z,t).$ Let $ \{ Y_j: j = 1,2,...,d(p,q) \}$ be an orthonormal basis of
$\CH_{p,q} $ consisting of spherical harmonics so that
$$ k(z,t) = \sum_{j=1}^{d(p,q)} k_{j}(z, t)Y_j $$
where $k_j(z, t)$ are scalar valued distributions on $\H^{n}.$
Then, we can write
$$ Tf(z, t) = \sum_{j =1}^{d(p,q)} T_jf(z, t)Y_{j} = \sum_{j=1}^{d(p,q)}
f*k_j(z,t) Y_j .$$
The hypothesis (i), namely $ \rho(\sigma) T\rho(\sigma^{-1})f = R(\sigma)Tf $
translates into
$$ R(\sigma)Tf(z,t) = T \rho(\sigma^{-1}) f(\sigma^{-1}z,t) $$
$$ =  \sum_{j=1}^{d(p,q)} T_{j} \rho(\sigma^{-1})f(\sigma^{-1}z, t)Y_{j}.$$
Let $ (a_{i,j}(\sigma)) $ stand for the matrix corresponding to $ R(\sigma)$
in the basis $ \{ Y_j: j=1,2,..,d(p,q) \}$. Then we have
$$ \sum_{j=1}^{d(p,q)} a_{i,j}(\sigma) T_{j}f(z,t) =
T_{i}\rho(\sigma^{-1})f(\sigma^{-1}z, t) $$
which gives after a simple calculation the relation
$$  f * \sum_{j=1}^{d(p,q)} a_{i,j}(\sigma) k_j(z,t) =
f * \rho(\sigma^{-1})k_i(z,t).$$
This shows that
$$ R(\sigma)k(z,t) = \rho(\sigma^{-1})k(z,t) = k(\sigma z,t),~~~ \sigma \in U(n).$$
From this relation we observe that for any unit vector $ w \in \C^{n}, k(w,t)
$ as an element of $ \CH_{p,q} $ is invariant under all $ \sigma \in U(n) $
which fix $ w.$ Therefore, there is a scalar valued function $ c(w,t) $ such
that $ k(w,t) = c(w,t)Y_w $ where $ Y_w $ is the zonal harmonic with pole at
$w.$

For any two unit vectors $ z $ and $ w $ we can always find $ \sigma \in
U(n) $ such that $ \sigma z = w.$ Hence $ R(\sigma)k(z,t) = k(\sigma z,t )
= k(w,t) $ leads to the equation
$$ c(w,t)Y_w = R(\sigma)k(z,t) = c(z,t)R(\sigma)Y_z .$$ Evaluating both
sides at $ w $ and noting that $ R(\sigma)Y_z(w) = Y_z(z) = Y_w(w) $ we
see that $ c(z,t) $ is a constant as long as $ |z| =1 $ and hence $ k(z,t)
= c(t) Y_z $ for all $ z \in S^{2n-1}.$ Now the hypothesis $ \delta_r T =
T\delta_r $ translates into the homogeneity
$ k(rz,r^2t) = r^{-2n-2}k(z,t).$ Therefore, for any $ z \in \C^n $
$$ k(z,t) = |z|^{-2n-2} k\left(\frac{z}{|z|},\frac{t}{|z|^2}\right) =  |z|^{-2n-2}
c\left(\frac{t}{|z|^2}\right)Y_{\frac{z}{|z|}}.$$ Let $ P_j $ stand for the solid
harmonic satisfying $ P_j(z) = |z|^{p+q} Y_j\left(\frac{z}{|z|}\right).$
Expanding $ k(z,t) $ in terms of
$ Y_j $ and noting that $ (Y_{\frac{z}{|z|}}, Y_j)_{\CH_{p,q}}
 = Y_j(\frac{z}{|z|}) = |z|^{-p-q} P_j(z) $ we get
$$ k_j(z,t) = |z|^{-2n-p-q-2}c\left(\frac{t}{|z|^2}\right)P_j(z) = g_j(z,t)P_j(z) .$$
Note that $ g_j(z,t) $ is a radial function satisfying the homogeneity
condition $ \delta_r g_j = r^{-2n-p-q-2}g_j.$

Let $ \beta $ be any linear functional on $ \CH_{p,q}.$ Then we have
$$ \beta(Tf) = \sum_{j=1}^{d(p,q)} c_j T_jf = \sum_{j=1}^{d(p,q)} c_j f*k_j.$$
The operator $ T_j $ is a Fourier multiplier operator with multiplier
$M_j(\lambda) = W_{\lambda}(k_j^\lambda)$ which can be calculated using
Hecke-Bochner formula. As $ k_j^\lambda(z) = g_j^\lambda(z)P_j(z) $ it
follows that for $ \lambda > 0 $
$$ W_\lambda(P_jg_j^\lambda) = G_\lambda(P_j)\left( \sum_{k=p}^\infty
R_{k-p}^\lambda(g_j^\lambda)P_k(\lambda) \right) $$
where
$$ R_k^\lambda(g_j^\lambda) =  \frac{\Gamma(k-p+1)\Gamma(n)}{\Gamma(k+q+n)}
 \int_{\C^{n+p+q}} g_j^\lambda(z)\varphi_{k,\lambda}^{n+p+q-1}(z) dz.$$
As $ g_j^\lambda(rz) = r^{-2n-p-q}g_j^{\lambda r^2}(z) $ we have
$ R_k^{\lambda r^2}(g_j^{\lambda r^2}) = r^{-p-q} R_k^\lambda
(g_j^\lambda) $ and hence $  R_k^\lambda(g_j^\lambda) = R_k^1(g_j^1)
\lambda^{-(p+q)/2}.$

Therefore, we have shown that the multiplier $ M_j(\lambda) $ is of the form
$$ M_j(\lambda) = G_\lambda(P_j) \left( \sum_{k=p}^\infty c_{k-p} P_k(\lambda)
\right) \lambda^{-(p+q)/2}.$$ Finally, the hypothesis on
$ M_j(\lambda)P_k(\lambda) $ shows that $ c_{k-p} = (2k+n)^{-(p+q)/2} $ for
all $ k \geq p.$ It can be checked that $ G_\lambda(P_j)P_k(\lambda) = 0 $
for $ k < p $ and
consequently $ M_j(\lambda) = G_\lambda(P_j)H(\lambda)^{-(p+q)/2}, $
which proves Theorem 1.4.

We now turn our attention to proofs of Theorems 1.5 and 1.6. Recall that 
for $ P
\in \CH_{p,q} $ we have defined $ R_P $ as the (higher order) Riesz transform
with multiplier $ G_\lambda(P)H(\lambda)^{-\frac{p+q}{2}}.$ As defined earlier 
$ O(P_0) $ stands for the orbit of $ P_0 $ under the action of $ U(n) .$ In 
other words, $ O(P_0) $ is the set of all $ \rho(\sigma)P_0 $ as $\sigma $ 
ranges over $U(n).$ 

\begin{prop} In order to prove dimension-free estimate for
$ R_P, P \in O(P_0) $ it is enough to consider $ P_0(z) = z_j^p \bar{z}_k^q.$
(Here we assume $ n \geq 2 $).
\end{prop}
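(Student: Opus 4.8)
The plan is to show that the $L^p(\H^n)$ operator norm of $R_P$ depends only on the $U(n)$-orbit of the solid harmonic $P$, so that every $P \in O(P_0)$ gives an operator with the same norm as $R_{P_0}$. The key observation is that the unitary group $U(n)$ acts on $L^p(\H^n)$ by the intertwining operators $\rho(\sigma) f(z,t) = f(\sigma^{-1}z,t)$, and these are isometries of $L^p(\H^n)$ since the automorphism $(z,t)\mapsto(\sigma z,t)$ preserves Haar measure. So it suffices to prove that $R_{\rho(\sigma)P} = \rho(\sigma) R_P \rho(\sigma^{-1})$ for every $\sigma \in U(n)$; conjugation by an isometry preserves operator norm, and the claim follows.

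First I would record how $U(n)$ interacts with the pieces that define $R_P$. On the Fourier transform side, the automorphism $(z,t)\mapsto(\sigma z,t)$ is implemented on each Schr\"odinger representation $\pi_\lambda$ by a unitary $\mu_\lambda(\sigma)$ on $L^2(\R^n)$ (the metaplectic representation), so that $\pi_\lambda(\sigma z,t) = \mu_\lambda(\sigma)\pi_\lambda(z,t)\mu_\lambda(\sigma)^{-1}$. Consequently the Weyl transform satisfies $W_\lambda(\rho(\sigma)g) = \mu_\lambda(\sigma) W_\lambda(g)\mu_\lambda(\sigma)^{-1}$ for functions $g$ on $\C^n$, and the Weyl correspondence is equivariant: $G_\lambda(\rho(\sigma)P) = \mu_\lambda(\sigma) G_\lambda(P)\mu_\lambda(\sigma)^{-1}$. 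Since $H(\lambda)$ is the Weyl correspondent of $|z|^2$, which is $U(n)$-invariant, $\mu_\lambda(\sigma)$ commutes with $H(\lambda)$ and hence with $H(\lambda)^{-(p+q)/2}$. Therefore the multiplier of $R_{\rho(\sigma)P}$ is $\mu_\lambda(\sigma)\,G_\lambda(P)H(\lambda)^{-(p+q)/2}\,\mu_\lambda(\sigma)^{-1}$, which is exactly the Fourier multiplier of $\rho(\sigma)R_P\rho(\sigma^{-1})$ (conjugating a convolution operator by $\rho(\sigma)$ conjugates its multiplier family by $\mu_\lambda(\sigma)$). This gives the identity $R_{\rho(\sigma)P} = \rho(\sigma)R_P\rho(\sigma^{-1})$ and hence equality of $L^p$ norms.

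With the norm shown to be an orbit invariant, it remains to identify a convenient representative of $O(P_0)$. Here $P_0(z) = z_j^p\bar z_k^q$ with $j\neq k$ is already the stated representative; one only needs to check that $P_0$ is a bigraded solid harmonic of bidegree $(p,q)$, which is immediate since $\Delta(z_j^p\bar z_k^q)=0$ when $j\neq k$ (the Laplacian on $\C^n$ is $4\sum_\ell \partial_{z_\ell}\partial_{\bar z_\ell}$, and $\partial_{z_\ell}\partial_{\bar z_\ell}$ annihilates $z_j^p\bar z_k^q$ for every $\ell$ when $j\neq k$), and that it is nonzero and lies in $\CH_{p,q}$. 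Since $\CH_{p,q}$ is $U(n)$-irreducible, all the analysis of $R_P$ for general $P$ in this orbit reduces to $R_{P_0}$, and the assumption $n\geq 2$ is exactly what guarantees we can pick $j\neq k$. Thus dimension-free bounds for $R_{P_0}$ yield the same bounds for all $R_P$, $P\in O(P_0)$.

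The main obstacle is making precise the metaplectic equivariance $G_\lambda(\rho(\sigma)P) = \mu_\lambda(\sigma)G_\lambda(P)\mu_\lambda(\sigma)^{-1}$ together with the commutation $[\mu_\lambda(\sigma),H(\lambda)]=0$, and tracking how conjugation by $\rho(\sigma)$ on $L^p(\H^n)$ transforms the operator-valued Fourier multiplier. These facts are standard (they are the Weyl-transform incarnation of the $U(n)$-covariance already used in the proof of Theorem 1.4, where $R(\sigma)k(z,t)=k(\sigma z,t)$ appears), but one must be careful that $\mu_\lambda(\sigma)$ is unitary on $L^2(\R^n)$ so that conjugation genuinely preserves norms, and that the identity $R_{\rho(\sigma)P}=\rho(\sigma)R_P\rho(\sigma^{-1})$ holds as bounded operators on $L^p$ and not merely formally on the Fourier-transform side. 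Once this covariance is in hand, the proposition is essentially a bookkeeping statement about orbits.
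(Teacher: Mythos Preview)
Your proof is correct and follows essentially the same route as the paper: both invoke the Stone--von Neumann intertwiners $\mu_\lambda(\sigma)$ to establish the equivariance $G_\lambda(\rho(\sigma)P)=\mu_\lambda(\sigma)G_\lambda(P)\mu_\lambda(\sigma)^{-1}$, use the commutation of $\mu_\lambda(\sigma)$ with $H(\lambda)$, and conclude that $R_{\rho(\sigma)P}=\rho(\sigma)R_P\rho(\sigma^{-1})$, so that the $L^p$ operator norm is constant on the orbit $O(P_0)$. Your write-up is somewhat more detailed in justifying the metaplectic covariance and the harmonicity of $P_0$, but the argument is the same.
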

\begin{proof} To prove the proposition we make use of the operators
$ \rho(\sigma) $ introduced earlier. For $ f \in L^p(\H^n),
\|\rho(\sigma)f\|_p = \|f\|_p $ and hence we need to show that
$$ \int_{\H^n} |\rho(\sigma)(R_P\rho(\sigma^*)f)(z,t)|^p dz dt \leq C_p
\int_{\H^n} |f(z,t)|^p dz dt.$$ From the theorem of Stone-von Neumann we know
that for every $ \sigma \in U(n) $
$$ \pi_\lambda(\sigma.z,t) = \mu_\lambda(\sigma)^*\pi_\lambda(z,t)
\mu_\lambda(\sigma)$$
where $\mu_\lambda(\sigma)$ are certain unitary operators on $L^2(\R^n).$ In
view of this a simple calculation shows that
$$ \pi_\lambda(\rho(\sigma)f) = \mu_\lambda(\sigma)\hat{f}(\lambda)
\mu_\lambda(\sigma)^* $$
and hence
$$ \pi_\lambda(\rho(\sigma)R_P\rho(\sigma^*)f) = \hat{f}(\lambda)
\mu_\lambda(\sigma)G_\lambda(P)\mu_\lambda(\sigma)^*
H(\lambda)^{-\frac{p+q}{2}} $$
where we have made use of the fact that $\mu_\lambda(\sigma) $ commutes with
$ H(\lambda).$ But
$$ \mu_\lambda(\sigma)G_\lambda(P)\mu_\lambda(\sigma)^* =
G_\lambda(\rho(\sigma)P) $$ and the proposition is proved by choosing $ \sigma
$ such that $ \rho(\sigma)P(z) = z_j^p \bar{z}_k^q $ which is possible for
any $ P \in O(P_0)$.
\end{proof}

The proof of the above proposition actually shows that if $ R_{P} $ is bounded 
on $ L^p(\H^n) $ then so is $ R_{(\rho(\sigma)P)}.$ As $ R $ is a unitary 
representation of $ U(n) $ on $  \CH_{p,q}, $ 
any $ P \in \CH_{p,q} $ is a finite linear combination of elements from the 
orbit of $ P_0(z).$ This proves Theorem 1.5.

We continue with our proof of Theorem 1.6. As $ j \neq k $ are arbitrary, 
we can very well assume that
$ P_0(z) = z_1^p \bar{z}_2^q .$ The operator $ G_\lambda(P) $ associated to 
it by the  Weyl correspondence is explicitly known and
given by $ A_2(\lambda)^q A_1(\lambda)^{*p},$ see
Geller \cite{G} or \cite{T2}.
Thus it is enough to consider the Riesz transform with this multiplier.
Without loss of generality we can assume that $ q \geq p.$ Since
$ A_2(\lambda)A_1(\lambda)^* $ commutes with $ H(\lambda) $ (as can be easily
checked by testing against the Hermite functions $ \Phi_\alpha^\lambda $) the
multiplier can be written as
$$  A_2(\lambda)^q A_1(\lambda)^{*p} H(\lambda)^{-\frac{p+q}{2}} =
A_2(\lambda)^{q-p}H(\lambda)^{-\frac{q-p}{2}}A_2(\lambda)^p
A_1(\lambda)^{*p} H(\lambda)^{-p}.$$
Furthermore, using the commutativity again, we have
$$ A_2(\lambda)^p A_1(\lambda)^{*p} H(\lambda)^{-p}
= (A_2(\lambda) A_1(\lambda)^*H(\lambda)^{-1})^p $$
which can be put in the form
$$ \left(H(\lambda)^{-1/2}
A_2(\lambda) A_1(\lambda)^*H(\lambda)^{-1/2}\right)^p .$$
It is now clear that the operator with multiplier
$ A_2(\lambda)^p A_1(\lambda)^{*p} H(\lambda)^{-p}$ satisfies a dimension-free
estimate since the first order Riesz transforms with multipliers
$ H(\lambda)^{-1/2} A_2(\lambda) $ and $  A_1(\lambda)^*H(\lambda)^{-1/2} $
satisfy such estimates. Here we have made use of the fact that the multiplier
$ H(\lambda)^{-1/2} A_2(\lambda) $ corresponds to the adjoint of the Riesz
transform whose multiplier is $ A_2(\lambda)^*H(\lambda)^{-1/2}.$
Therefore, Theorem 1.6 will be proved once we have

\begin{thm} For  any integer $ m \geq 1 $ the Riesz transform with multiplier
$ A_2(\lambda)^m  H(\lambda)^{-m/2} $ satisfies dimension-free estimates.
\end{thm}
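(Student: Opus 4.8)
The plan is to peel a first order Riesz transform off the operator $\CK$ on $\H^n$ whose group Fourier multiplier is $A_2(\lambda)^m H(\lambda)^{-m/2}$, and to recognise what remains as a fixed holomorphic function of the operator $B:=i(\partial/\partial t)\CL^{-1}$, whose $L^p$‑norm turns out to be $O(1/n)$. First I would record the commutation relation $H(\lambda)A_2(\lambda)=A_2(\lambda)\bigl(H(\lambda)+2\lambda\bigr)$ (immediate from $[A_j^*(\lambda),A_j(\lambda)]=2\lambda$ and $H(\lambda)=\tfrac12\sum_j(A_j(\lambda)A_j^*(\lambda)+A_j^*(\lambda)A_j(\lambda))$); iterating it shows that the multiplier of $R_2^{\,m}=(Z_2\CL^{-1/2})^m$ equals $A_2(\lambda)^m\prod_{i=0}^{m-1}(H(\lambda)+2i\lambda)^{-1/2}$, whence
$$ \CK=S\circ R_2^{\,m},\qquad S=\text{the operator with multiplier }\ \prod_{i=0}^{m-1}\Bigl(\frac{H(\lambda)+2i\lambda}{H(\lambda)}\Bigr)^{1/2}. $$
The multiplier of $S$ is $f\bigl(\lambda H(\lambda)^{-1}\bigr)$ with $f(x)=\prod_{i=0}^{m-1}(1+2ix)^{1/2}$, a function holomorphic on the disc $|x|<(2(m-1))^{-1}$ (and $f\equiv1$ when $m=1$); since $\lambda H(\lambda)^{-1}$ is precisely the multiplier of $B$, this says $S=f(B)$. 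Because the first order Riesz transforms satisfy dimension‑free $L^p$ bounds by \cite{CMZ}, we would then have $\|\CK f\|_p\le C_p^{\,m}\|S\|_{p\to p}\|f\|_p$, and the whole matter reduces to a dimension‑free bound for $S=f(B)$.

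The key point — which I believe is what the introduction calls the ``particular singular integral operator'' — is the estimate $\|B\|_{L^p(\H^n)\to L^p(\H^n)}\le C_p/n$ with $C_p$ independent of $n$. This follows from \cite{CMZ} alone: since $[X_j,Y_j]=-\partial_t$ for every $j$, averaging over $j$ gives $i\,\partial_t=-\tfrac{i}{n}\sum_{j=1}^n(X_jY_j-Y_jX_j)$, so, using $[\CL,\partial_t]=0$ and $X_j^*=-X_j$, $Y_j^*=-Y_j$,
$$ B=\frac{i}{n}\sum_{j=1}^n\Bigl((X_j\CL^{-1/2})^*(Y_j\CL^{-1/2})-(Y_j\CL^{-1/2})^*(X_j\CL^{-1/2})\Bigr). $$
The vector first order Riesz transform $f\mapsto\bigl((X_j\CL^{-1/2}f)_j,(Y_j\CL^{-1/2}f)_j\bigr)$ maps $L^p(\H^n)$ into $L^p(\H^n;\ell^2)$ with dimension‑free norm (this is the estimate of \cite{CMZ}, rewritten through $X_j\CL^{-1/2}=\tfrac12(R_j+\bar R_j)$, $Y_j\CL^{-1/2}=\tfrac{i}{2}(R_j-\bar R_j)$), and by duality so does $(g_j)_j\mapsto\sum_j(X_j\CL^{-1/2})^*g_j$ from $L^p(\H^n;\ell^2)$ to $L^p(\H^n)$; composing the two gives a dimension‑free bound for $\sum_j(X_j\CL^{-1/2})^*(Y_j\CL^{-1/2})$, and dividing by $n$ yields the claim.

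To conclude, fix $r$ with $0<r<(2(m-1))^{-1}$. For every $n$ so large that $\|B\|_{p\to p}<r/2$, the holomorphic functional calculus gives $S=f(B)=\tfrac{1}{2\pi i}\oint_{|\zeta|=r}f(\zeta)(\zeta I-B)^{-1}\,d\zeta$, hence $\|S\|_{p\to p}\le 2\sup_{|\zeta|=r}|f(\zeta)|=:C_{p,m}$, a bound depending only on $p$ and $m$. For the remaining finitely many (small) values of $n$, the operator $\CK$ is bounded on $L^p(\H^n)$ by the classical $L^p$‑theory of higher order Riesz transforms on $\H^n$ recalled in the introduction. Therefore $\sup_n\|\CK\|_{L^p(\H^n)\to L^p(\H^n)}<\infty$, which is the assertion; the weak type $(1,1)$ statement, when needed, follows from the Calderón–Zygmund nature of $\CK$ together with the uniformity of these bounds.

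The main obstacle, I expect, is making the factorisation $\CK=S\circ R_2^{\,m}$ a rigorous identity of bounded operators: a priori the multiplier of $S$ need not even be positive at the bottom of the spectrum for small $n$, so one must verify the identity on the Hermite basis of $L^2(\R^n)$ and use that both $\CK$ and $R_2^{\,m}$ annihilate the subspaces on which $A_2(\lambda)^m$ vanishes. Only the large‑$n$ regime matters for the dimension‑free conclusion, and there $f$ is genuinely holomorphic on a neighbourhood of the spectrum of $B$, so no difficulty remains. By contrast the analytic input — the $O(1/n)$ estimate for $B=i(\partial/\partial t)\CL^{-1}$ — is a painless consequence of the dimension‑free vector‑valued inequality of \cite{CMZ}, in keeping with the spirit of the paper.
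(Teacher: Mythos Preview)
Your argument is correct and reaches the same conclusion as the paper, but by a genuinely different route. The paper factors $A_2(\lambda)^mH(\lambda)^{-m/2}$ as the telescoping product $\prod_{j=0}^{m-1}H(\lambda)^{j/2}A_2(\lambda)H(\lambda)^{-(j+1)/2}$ and treats each factor by induction on $j$: the commutation relation $H(\lambda)A_2(\lambda)=A_2(\lambda)(H(\lambda)+c\lambda)$ reduces the $j$-th factor to the $(j-2)$-th one composed with $(I+cB)$, where $B$ has multiplier $\lambda H(\lambda)^{-1}$. The dimension-free bound for $B=T\CL^{-1}$ is then obtained by an explicit kernel computation --- Folland's fundamental solution together with a ratio of beta integrals --- which yields $\|B\|_{p\to p}\le C_p$ uniformly in $n$. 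You instead write $\CK=f(B)\circ R_2^{\,m}$ with $f$ holomorphic near $0$ and prove the sharper estimate $\|B\|_{p\to p}=O(1/n)$ directly from \cite{CMZ}, via the identity $nB=i\sum_j\bigl[(X_j\CL^{-1/2})^{*}(Y_j\CL^{-1/2})-(Y_j\CL^{-1/2})^{*}(X_j\CL^{-1/2})\bigr]$ and the vector-valued Riesz inequality plus its dual; this lets you control $f(B)$ by the power series (equivalently the Cauchy integral) once $n$ is large enough, while the finitely many small $n$ are disposed of by the classical $L^p$ theory. Your approach is more self-contained --- no kernel calculation is needed, everything flows from \cite{CMZ} --- and yields a quantitatively stronger bound on $B$; the paper's approach is uniform in $n$ from the outset, with no large-$n$/small-$n$ dichotomy, and makes visible that each use of the commutation relation costs exactly one factor of $\|T\CL^{-1}\|_{p\to p}$.
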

\begin{proof} Writing
$$  A_2(\lambda)^m H(\lambda)^{-m/2} = \prod_{j=0}^{m-1}
H(\lambda)^{j/2}A_2(\lambda) H(\lambda)^{-(j+1)/2} $$
we only need to treat multipliers of the form
$$ H(\lambda)^{j/2}A_2(\lambda) H(\lambda)^{-(j+1)/2} $$
for $ j \geq 1 $ which we do using an induction argument. Recalling that
$$ H(\lambda) = \frac{1}{2} \sum_{j=1}^n\left(
A_j(\lambda)A_j^*(\lambda)+A_j^*(\lambda)A_j(\lambda)\right) $$ and the commutation
relation
$$ A_j(\lambda)A_j(\lambda)^* - A_j^*(\lambda)A_j(\lambda) = -4\lambda I $$
we can show that
$$ H(\lambda) A_j(\lambda) =  A_j(\lambda) H(\lambda)+ 4\lambda A_j(\lambda).$$
In view of this
$$  H(\lambda)^{j/2}A_2(\lambda) H(\lambda)^{-(j+1)/2} =
 H(\lambda)^{(j-2)/2} H(\lambda)A_2(\lambda) H(\lambda)^{-(j+1)/2} $$
reduces to
$$  H(\lambda)^{(j-2)/2} A_2(\lambda) H(\lambda)^{-(j-1)/2} + 4
 H(\lambda)^{(j-2)/2} A_2(\lambda) H(\lambda)^{-(j-1)/2}
\lambda H(\lambda)^{-1}.$$
If we can show that the operator with multiplier $ \lambda H(\lambda)^{-1}$
satisfies dimension-free estimates then we can use an induction on $ j $ to
complete the proof of the above theorem. Thus the proof is completed by
proving the following result.
\end{proof}

\begin{thm} The multiplier $ \lambda H(\lambda)^{-1}$ defines a singular
integral operator on $ \H^n $ which satisfies dimension-free bounds on all $ L^p $ spaces,
$ 1< p < \infty.$
\end{thm}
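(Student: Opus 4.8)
The plan is to recognise the operator $ S $ with Fourier multiplier $ \lambda H(\lambda)^{-1} $ as (up to sign) $ i\,T\CL^{-1} $ on $ \H^n, $ to observe that it is a Calder\'on--Zygmund operator by the standard homogeneity argument, and then to extract the dimension-free $ L^p $ bounds from an algebraic identity that rewrites $ T\CL^{-1} $ in terms of the first order Riesz transforms $ R_j $ and their adjoints, which is fed into the dimension-free square function estimate of \cite{CMZ}.

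First I would identify $ S. $ Since $ \pi_\lambda(\CL f)=\hat f(\lambda)H(\lambda) $ and $ d\pi_\lambda(T) $ acts as the scalar $ \pm i\lambda, $ the multiplier $ \lambda H(\lambda)^{-1} $ is, up to sign, the multiplier of $ i\,T\CL^{-1}; $ the sign is immaterial for the boundedness statements. Because $ \|\lambda H(\lambda)^{-1}\|_{op}=\sup_{k\geq 0}(2k+n)^{-1}=n^{-1}, $ the operator $ S $ is bounded on $ L^2(\H^n) $ with norm $ n^{-1}. $ Next, as the homogeneous dimension $ Q=2n+2 $ exceeds $ 2, $ Folland's homogeneous fundamental solution of $ \CL $ (see \cite{F}) is smooth away from the origin and homogeneous of degree $ 2-Q $ for the Heisenberg dilations; the convolution kernel of $ S $ is obtained by applying $ T $ to it, hence is smooth off the origin and homogeneous of degree $ -Q. $ Combined with the $ L^2 $ boundedness, this exhibits $ S $ as a singular integral operator on the homogeneous group $ \H^n, $ hence bounded on $ L^p(\H^n),\ 1<p<\infty, $ and of weak type $ (1,1), $ but so far only with constants that may depend on $ n. $

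For the dimension-free bound I would use the commutation relations of the sub-Laplacian. A direct computation gives $ [X_j,Y_j]=-T, $ hence $ \bar Z_j Z_j=X_j^2+Y_j^2+iT $ and therefore $ \sum_{j=1}^n \bar Z_j Z_j=-\CL+ni\,T. $ On $ L^2(\H^n) $ one has $ Z_j^*=-\bar Z_j, $ so $ R_j=Z_j\CL^{-1/2} $ has adjoint $ R_j^*=-\CL^{-1/2}\bar Z_j; $ and since $ T $ is central it commutes with $ \CL^{\pm 1/2}, $ so summing $ R_j^*R_j=-\CL^{-1/2}\bar Z_j Z_j\CL^{-1/2} $ over $ j $ gives
$$ \sum_{j=1}^n R_j^*R_j \;=\; I-ni\,T\CL^{-1}, \qquad\text{i.e.}\qquad S \;=\; \pm\frac1n\Big(I-\sum_{j=1}^n R_j^*R_j\Big). $$
It then suffices to bound $ \sum_{j=1}^n R_j^*R_j $ on $ L^p(\H^n) $ uniformly in $ n, $ which is immediate by duality: for $ f\in L^p $ and $ g\in L^{p'}, $
$$ \Big|\big\langle\textstyle\sum_{j=1}^n R_j^*R_j f,\,g\big\rangle\Big| \;=\; \Big|\sum_{j=1}^n\langle R_j f,\,R_j g\rangle\Big| \;\leq\; \int_{\H^n}\Big(\sum_{j=1}^n|R_j f|^2\Big)^{1/2}\Big(\sum_{j=1}^n|R_j g|^2\Big)^{1/2} \;\leq\; C_pC_{p'}\|f\|_p\|g\|_{p'}, $$
the last step being exactly the dimension-free square function estimate for the first order Riesz transforms on $ \H^n $ proved in \cite{CMZ} (the estimate from which Theorem 1.1 is deduced). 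Hence $ \|\sum_{j=1}^n R_j^*R_j\|_{L^p\to L^p}\leq C_pC_{p'} $ and consequently $ \|S\|_{L^p\to L^p}\leq n^{-1}(1+C_pC_{p'}), $ which is independent of $ n; $ this completes the proof.

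The substantive step is the identity in the third paragraph --- once $ T\CL^{-1} $ is expressed through $ \sum_j R_j^*R_j, $ the dimension-free bound comes essentially for free from \cite{CMZ}. The only point requiring care is the functional-analytic bookkeeping behind that identity: $ \CL $ is injective with dense range on $ L^2(\H^n) $ (it has no $ L^2 $-eigenfunctions), so $ \CL^{-1/2} $ and the cancellation $ \CL^{-1/2}\CL\,\CL^{-1/2}=I $ are meaningful on a suitable dense domain (for instance on Schwartz functions whose group Fourier transforms vanish near $ \lambda=0 $), where the identity is first verified and from which it extends by the $ L^2 $- and $ L^p $-boundedness of the operators involved.
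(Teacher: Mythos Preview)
Your argument is correct and takes a genuinely different route from the paper. The paper proves the theorem by writing the kernel of $T\CL^{-1}$ explicitly as $T\varphi_0$, where $\varphi_0(z,t)=c_n(|z|^4+t^2)^{-n/2}$ is Folland's fundamental solution, then invokes a lemma of Christ (in the form given by Strichartz) which bounds the $L^p$ norm of a homogeneous odd convolution operator by $C_p\int_{\C^n}|K(z,1)|\,dz$; the remaining work is a direct beta-integral computation (the paper's Lemma~3.4) showing that $n c_n\int_{\C^n}(1+|z|^4)^{-n/2-1}\,dz$ is bounded uniformly in $n$. Your approach bypasses all of this by the algebraic identity $\sum_j R_j^*R_j = I - ni\,T\CL^{-1}$, which reduces the problem to the dimension-free square-function estimate of \cite{CMZ} via a clean duality argument. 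What the paper's method buys is that this particular step becomes independent of \cite{CMZ} (the paper only appeals to \cite{CMZ} elsewhere in the proof of Theorem~1.6), and one sees the explicit kernel; what your method buys is that no kernel estimate or special-function computation is needed at all, and it even yields the sharper bound $\|S\|_{L^p\to L^p}\le n^{-1}(1+C_pC_{p'})$, which decays in $n$ rather than merely being uniform.
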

\begin{proof} The boundedness of this operator is well known, see Folland
\cite{F} and Stein \cite{S3}. Indeed, the above multiplier corresponds to
$ T \CL^{-1} $ where $ T = \frac{\partial}{\partial t} $ and is given by
convolution with the kernel
$ K(z,t) = T \varphi_0(z,t) $ where
$$ \varphi_0(z,t) = c_n (|z|^4+t^2)^{-n/2} $$ is the fundamental solution
for the sublaplacian found by Folland \cite{F}. Here the constant $ c_n $ is
given by
$$ c_n^{-1} = n(n+2) \int_{\H^n}(1+t^2+|z|^4)^{-(n+4)/2} |z|^2 dz dt.$$ It
is clear that $ K(z,t) $ is an odd kernel which is homogeneous of degree
$(-2n-2) $ and hence defines a bounded operator on $ L^p(\H^n),
1< p < \infty.$ The norm of this operator can be estimated using a result
of Christ \cite{C} as presented in  Strichartz \cite{St} (Lemma 3.1). This
shows that the norm on $ L^p $ is bounded by
$$ C_p \int_{\C^n} |K(z,1)| dz $$
where $ C_p $ depends only on $ p.$ Thus we are left with proving the
inequality
$$ n c_n \int_{\C^n} \left(1+|z|^4\right)^{-n/2-1} dz  \leq C $$
for some constant $ C $ independent of $n.$ This follows from the next lemma.
\end{proof}

\begin{lem} There is a constant $ C $ independent of $ n $ such that
$$ \frac{ \int_{\C^n} (1+|z|^4)^{-n/2-1} dz }
{(n+2) \int_{\H^n}(1+t^2+|z|^4)^{-(n+4)/2} |z|^2 dz dt } \leq C $$
for all values on $ n.$
\end{lem}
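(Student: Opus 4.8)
The plan is to evaluate both integrals in closed form in terms of Gamma functions; once this is done the quotient collapses to an elementary expression which is not merely bounded but in fact tends to $0$ as $n\to\infty$. Throughout I would identify $\C^n$ with $\R^{2n}$, pass to polar coordinates using the surface measure $2\pi^n/\Gamma(n)$ of the unit sphere in $\R^{2n}$, and reduce the resulting radial integrals to Euler Beta-integrals via the substitution $u=r^4$ together with the identity $\int_0^\infty u^{a-1}(1+u)^{-a-b}\,du=\Gamma(a)\Gamma(b)/\Gamma(a+b)$.

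First I would treat the numerator. With $u=r^4$ one computes $\int_0^\infty(1+r^4)^{-n/2-1}r^{2n-1}\,dr=\tfrac14 B(n/2,1)=\tfrac{1}{2n}$, so that
$$ \int_{\C^n}(1+|z|^4)^{-n/2-1}\,dz=\frac{2\pi^n}{\Gamma(n)}\cdot\frac{1}{2n}=\frac{\pi^n}{\Gamma(n+1)}. $$
For the denominator I would integrate first in $t$: setting $a^2=1+|z|^4$ and $t=as$,
$$ \int_{\R}(1+|z|^4+t^2)^{-(n+4)/2}\,dt=(1+|z|^4)^{-(n+3)/2}\,\sqrt{\pi}\,\frac{\Gamma((n+3)/2)}{\Gamma((n+4)/2)}, $$
and the remaining integral over $\C^n$ is handled exactly as the numerator was, giving $\int_{\C^n}(1+|z|^4)^{-(n+3)/2}|z|^2\,dz=\pi^n/((n+1)\Gamma(n))$. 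Assembling these, the denominator of the lemma equals $(n+2)\,\sqrt{\pi}\,\dfrac{\Gamma((n+3)/2)}{\Gamma((n+4)/2)}\cdot\dfrac{\pi^n}{(n+1)\Gamma(n)}$, and hence the ratio in the lemma simplifies to
$$ \frac{n+1}{n(n+2)\sqrt{\pi}}\cdot\frac{\Gamma((n+4)/2)}{\Gamma((n+3)/2)}. $$

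It remains only to control the ratio of Gamma functions. Writing $x=(n+3)/2$ and using log-convexity of $\Gamma$ — so that $\Gamma(x+\tfrac12)^2\le\Gamma(x)\Gamma(x+1)=x\,\Gamma(x)^2$ — one gets $\Gamma((n+4)/2)/\Gamma((n+3)/2)\le\sqrt{(n+3)/2}$. Consequently the ratio in the lemma is at most $\frac{n+1}{n(n+2)}\sqrt{(n+3)/(2\pi)}$, which is $O(n^{-1/2})$ and therefore bounded by an absolute constant $C$ for all $n$; this proves the lemma.

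There is no conceptual obstacle here: the statement is purely a computation. The only points that require a little care are keeping the Beta-function parameters straight through the two $u=r^4$ substitutions, and picking a clean elementary bound for the Gamma quotient — for which log-convexity of $\Gamma$ is the most economical tool (alternatively one could invoke Gautschi's or Wendel's inequality for $\Gamma(x+s)/\Gamma(x)$).
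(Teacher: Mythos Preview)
Your proof is correct and follows essentially the same route as the paper: reduce both integrals to Beta integrals via polar coordinates and the substitution $u=r^4$, then observe that the resulting Gamma quotient is bounded (indeed $O(n^{-1/2})$). The only cosmetic differences are that the paper factors the double integral by scaling $r$ by $(1+t^2)^{1/4}$ rather than scaling $t$ by $(1+r^4)^{1/2}$, and so lands on the equivalent expression $\pi^{-1/2}\Gamma(n/2)/\Gamma((n+1)/2)$; your explicit use of log-convexity to bound the Gamma ratio is a nice touch where the paper simply asserts boundedness.
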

\begin{proof} Integrating in polar coordinates, the above ratio reduces to
$$ \frac{ \int_0^\infty (1+r^4)^{-n/2-1} r^{2n-1}dr }
{2(n+2) \int_0^\infty \int_0^\infty (1+t^2+r^4)^{-(n+4)/2} r^{2n+1} dr dt }.$$
By a simple change of variables the integral on the numerator  can be seen
to be the beta integral
$$ \frac{1}{4}\int_0^\infty (1+t)^{-n/2-1}t^{n/2-1} dt  =
\frac{ \Gamma(1)\Gamma(n/2)}{4\Gamma(n/2+1)}.$$
Similarly, the integral in the denominator becomes the product
$$ \bigg( \int_0^\infty (1+t^2)^{-3/2}dt \bigg) \bigg(\int_0^\infty
(1+u^4)^{-(n+4)/2} u^{(2n+1)} du\bigg).$$
As before,
$$
\int_0^\infty (1+t^2)^{-3/2}dt = \frac{\Gamma(1/2)\Gamma(1)}
{2\Gamma(3/2)} $$
and
$$ \int_0^\infty (1+u^4)^{-(n+4)/2} u^{(2n+1)} du=
\frac{ \Gamma(3/2)\Gamma((n+1)/2)}
{4 \Gamma((n+4)/2)} .$$
Thus the ratio of the two integrals is given by
$$ \frac{ \Gamma(n/2)}{4\Gamma(n/2+1)} \frac{8\Gamma(n/2+2)}
{\Gamma(1/2)\Gamma((n+1)/2)}.$$
Simplifying this we see that the ratio in the
lemma is given by
$$ \pi^{-1/2} \frac{\Gamma(n/2)}{\Gamma((n+1)/2)} $$
which is clearly a bounded function of $ n.$
\end{proof}

\section{Riesz transforms for Hermite and Laguerre expansions}
\setcounter{equation}{0}

In this section we consider Riesz transforms associated to Hermite and
Laguerre expansions. First we can deduce Theorem 1.3 from the corresponding
result for the reduced Heisenberg group. We fix $ \lambda = 1 $ and simply
write $ W $ and $ G $ instead of $ W_1 $ and $ G_1.$ Consider
the Weyl multiplier $ T_P $ defined by
$$ W(T_Pf) = W(f)G(P)H^{-(p+q)/2} $$ where $ P \in \CH_{p,q}.$
The Weyl multipliers can be considered as higher order Riesz transforms for
the special Hermite operator. Recall that the special Hermite operator $ L $
is defined by $ \CL(e^{it}f(z)) = e^{it}Lf(z).$ The spectral decomposition
of this operator is given by
$$ Lf = (2\pi)^{-n} \sum_{k=0}^\infty f\times \varphi_k^{n-1} $$
where $ \varphi_k^{n-1}(z) $ are Laguerre functions of type $ (n-1) $ and
$ \times $
stands for the twisted convolution:
$$ f\times g(z) = \int_{\C^n} f(z-w) g(w) e^{\frac{i}{2}\Im(z \cdot \bar{w})}
dw.$$

When $ P(z) = z_j $ (resp. $\bar{z}_j $) it follows from the work of
Geller  that $ G(P) = Z_j(1)$ (resp. $ G(P) = \overline{Z_j}(1))$
and hence $ T_{z_j} = Z_j(1)L^{-1/2} $ and $ T_{\bar{z}_j}=
\overline{Z_j}(1))L^{-1/2} $ are
the Riesz transforms for the special
Hermite  expansions. More generally if $ P(z) = \sum_{|\alpha|=p,|\beta| =q}
c_{\alpha,\beta} z^\alpha \bar{z}^\beta $ then $ G(P) $ is obtained by
replacing $ z_j $ and $ \bar{z}_j $ by $ Z_j(1) $ and $ \overline{Z_j}(1) $
respectively. Though the following result can be deduced from results on the
reduced Heisenberg group by considering functions of the form $ e^{it}f(z) $
we can give a simple direct proof based on the Hecke-Bochner formula for the
Weyl transform.

\begin{thm} For every $ P \in \CH_{p,q} $ the Riesz transforms $ T_P $ are
bounded on $ L^p(\C^n), 1 < p < \infty $ and are also weak type $ (1,1).$
The first order Riesz transforms satisfy dimension-free bounds.
\end{thm}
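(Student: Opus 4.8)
The plan is to show that $T_P$ is, up to a harmless error, an oscillatory singular integral on $\C^n\cong\R^{2n}$ of the type handled by Ricci--Stein \cite{RS} and Chanillo--Christ \cite{CC}, and to obtain the dimension-free statement by transference from the reduced Heisenberg group. Fix $\lambda=1$ and write $m=p+q$; we may assume $m\geq1$, since for $m=0$ the operator $T_P$ is a multiple of the identity. Because the Weyl transform turns twisted convolution into operator composition, $W(f\times g)=W(f)W(g)$, the defining relation $W(T_Pf)=W(f)G(P)H^{-m/2}$ shows that $T_Pf=f\times k_P$ is a twisted convolution operator whose kernel $k_P$ is the inverse Weyl transform of $G(P)H^{-m/2}$.

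First I would identify $k_P$. Looking for $k_P$ of the form $k_P=P\cdot\psi$ with $\psi$ radial and applying the Hecke--Bochner formula for the Weyl transform (Geller \cite{G}) recalled in Section 3, one gets $W(P\psi)=G(P)\big(\sum_{k\geq p}R_{k-p}(\psi)P_k\big)$; comparing with $G(P)H^{-m/2}=G(P)\big(\sum_{k\geq p}(2k+n)^{-m/2}P_k\big)$ (using $G(P)P_k=0$ for $k<p$) pins down $\psi$ as the radial function whose Laguerre coefficients of type $n+p+q-1$ are $(2k+n)^{-m/2}$. To turn this into a workable formula I would then use the subordination identity $H^{-m/2}=\Gamma(m/2)^{-1}\int_0^\infty t^{m/2-1}e^{-tH}\,dt$, the explicit description of $G(P)$ by creation/annihilation operators (Geller \cite{G}), and the Mehler kernel of the special Hermite semigroup $e^{-tL}$; this represents $k_P(z)=P(z)\psi(|z|)$ as an explicit $t$-integral of twisted-convolution derivatives of Gaussians.

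The key step, and the main obstacle, is the kernel estimate: one must show that $\psi(r)=c_{n,m}\,r^{-2n-m}+(\text{less singular terms})$ as $r\to0$ and that $\psi$ decays like a Gaussian as $r\to\infty$. Granting this, $|k_P(z)|\lesssim|P(z)|\,|z|^{-2n-m}\lesssim|z|^{-2n}$ near the origin, with the corresponding gradient bound, while $\int_{S^{2n-1}}P=0$ since $P$ is a solid harmonic of degree $m\geq1$ (hence harmonic for the flat Laplacian on $\R^{2n}$); away from the origin $k_P$ is bounded and integrable. Thus $k_P$ is a Calder\'on--Zygmund convolution kernel on $\R^{2n}$ plus an $L^1$ tail, and the phase $\tfrac12\Im(z\cdot\bar w)$ in the twisted convolution is bilinear, hence a real polynomial phase. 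Ricci--Stein \cite{RS} then gives boundedness on $L^p(\C^n)$, $1<p<\infty$, and Chanillo--Christ \cite{CC} gives the weak type $(1,1)$ bound, the $L^1$ tail being controlled by Young's inequality. Carrying out the $r\to0$ and $r\to\infty$ asymptotics uniformly is the delicate point, and I expect it to rest either on the asymptotics of Laguerre functions of type $n+p+q-1$ or, via the subordination route, on a careful splitting of the $t$-integral into $t$ near $0$ and $t$ large.

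For the last assertion, when $P(z)=z_j$ or $\bar z_j$ we have $m=1$ and $T_{z_j}=Z_j(1)L^{-1/2}$, $T_{\bar z_j}=\overline{Z}_j(1)L^{-1/2}$, and I would transfer the dimension-free bound from $\H^n/\Gamma$. Given $f\in L^p(\C^n)$ set $F(z,t)=e^{it}f(z)$, which lies in $L^p(\H^n/\Gamma)$ with $\|F\|_p=(2\pi)^{1/p}\|f\|_p$. Since $\CL(e^{it}g)=e^{it}Lg$ and $Z_j(e^{it}g)=e^{it}Z_j(1)g$, one obtains $R_jF(z,t)=e^{it}T_{z_j}f(z)$ and $\bar R_jF(z,t)=e^{it}T_{\bar z_j}f(z)$. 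Applying Theorem 1.1 to $F$ and cancelling the factor $2\pi$ yields
\[
\Big\|\big(\textstyle\sum_{j=1}^n|T_{z_j}f|^2+\sum_{j=1}^n|T_{\bar z_j}f|^2\big)^{1/2}\Big\|_{L^p(\C^n)}\leq C_p\|f\|_{L^p(\C^n)},
\]
with $C_p$ independent of $n$, which is the claimed dimension-free bound for the first order Riesz transforms.
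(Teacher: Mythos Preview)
Your proposal is correct and follows essentially the same route as the paper: identify $T_P$ as a twisted convolution with a kernel of the form $P\cdot K_m$ via Geller's Hecke--Bochner formula, estimate the radial factor through the heat/Mehler kernel (the paper phrases this as expressing $K_m$ via the heat kernel for $L$ on $\C^{n+p+q}$, which is the same subordination idea), and then invoke Ricci--Stein \cite{RS} and Chanillo--Christ \cite{CC} for the resulting oscillatory singular integral. Your transference argument for the dimension-free first-order bound, via $F(z,t)=e^{it}f(z)$ on $\H^n/\Gamma$, is exactly the mechanism the paper points to just before stating the theorem.
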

\begin{proof} The proof is very similar to the one given in \cite{T0} for
the cases $ P(z) = z_j $ and $ P(z) = \bar{z}_j.$ Hence we will only give a
sketch of the proof. The Hecke-Bochner formula for the special Hermite
projections (see Geller \cite{G} and \cite{T2}) it follows that
$$ G(P)P_k = c_n(p,q) W(P\varphi_{k-p}^{n+p+q-1}) $$ (see 2.6.22 in \cite{T2}) and hence
$$ G(P)H^{-(p+q)/2} = c_n(p,q) W(PK_m) $$
where $ m = p+q $ and $ K_m $ is the kernel defined by
$$ K_m(z) = \sum_{k=0}^\infty (2k+2p+n)^{-m/2}\varphi_k^{n+p+q-1}(z).$$ On the
other hand using the relations
$$ W(Z_jf) = W(f)A_j,~~~~ W(\bar{Z}_jf) = W(f)A_j^* $$ we can obtain
$$ W(L^{-m/2}P(Z,\bar{Z})f) = W(f)G(P)H^{-m/2} $$
so that a comparison with the above formula shows that
$$ L^{-m/2}P(Z,\bar{Z})f(z) = f\times(PK_m)(z) .$$ The kernel $ K_m $ can
be expressed in terms of the heat kernel for $ L $ on $ \C^{n+p+q} $ and hence
estimated. It turns out that  $ L^{-m/2}P(Z,\bar{Z})$ is an oscillatory
singular integral operator. Hence by the results of Ricci-Stein \cite{RS}
and Chanillo-Christ \cite{CC} it is bounded on $ L^p(\C^n) $ for $ 1 < p <
\infty $ and of weak type (1,1). By considering the adjoint we get the
desired result.
\end{proof}

We now deduce some results for Riesz transforms associated to multiple
Laguerre expansions. In terms of special Hermite functions the spectral
decomposition of $ L $ takes the form
$$  f(z) = \sum_{\alpha,\beta} (f,\Phi_{\alpha,\beta})\Phi_{\alpha,\beta}(z) .$$
When $ f $ is a polyradial function, i.e., $
f(\sigma z) = f(z)$ for all diagonal
matrices $ \sigma $ in $ U(n),$ the special Hermite expansion of $ f $ reduces to
a multiple Laguerre expansion. More generally, when $ f $ is a polyradial
function the special Hermite expansion of $ g(z) = z^m f(z) $ where $ m \in
\N^n $ takes the form
$$  g(z) = \sum_{\alpha} \left( \int_{\C^n} g(z) \overline{\Phi_{\alpha,\alpha+m}}
(w) dw \right) \Phi_{\alpha,\alpha +m}(z).$$  Indeed, we have the formula
(see Theorem 1.3.5 in \cite{T0})
$$ \Phi_{\alpha,\alpha +m}(z) = (2\pi)^{-n/2} \left( \frac{\alpha!}
{(\alpha +m)!}\right)^{1/2} 2^{-|m|/2}(-i)^{|m|} z^m \prod_{j=1}^n
\varphi_{\alpha_j}^{m_j}(z_j) $$ and hence in the expansion only the term
$(g,\Phi_{\alpha,\alpha+m})$ survives. Note that
$$  \left( \int_{\C^n} g(z) \overline{\Phi_{\alpha,\alpha+m}}
(w) dw \right) \Phi_{\alpha,\alpha +m}(z)$$
$$ = (2\pi)^{-n} \frac{\alpha!}
{(\alpha +m)!}(\frac{z}{2})^m \left( \int_{\C^n} f(w)\prod_{j=1}^n
\varphi_{\alpha_j}^{m_j}(w_j) |w_j|^{2m_j+1} dw \right) \prod_{j=1}^n
\varphi_{\alpha_j}^{m_j}(z_j).$$

We now define multiple Laguerre functions of type $ m $ by
$$ \Psi_{\alpha}^m(r) = \left(\frac{2^{-|m|}\alpha!}
{(\alpha+m)!}\right)^{1/2} \prod_{j=1}^n \varphi_{\alpha_j}^{m_j}(r_j)$$
where $ r =(r_j), r_j \geq 0.$ Here
$ \varphi_{\alpha_j}^{m_j}(r_j) $ stands for $\varphi_{\alpha_j}^{m_j}
(z_j) $ with $ |z_j| = r_j.$ These functions form an orthonormal basis  for
$ L^2(\R_+^n, d\mu_m) $ where
$$ d\mu_m(r) = \prod_{j=1}^n r_j^{2m_j+1}dr_j.$$
The multiple Laguerre expansion of type $ m $ of a function $ f
\in L^2(\R_+^n, d\mu_m) $ is related to the special Hermite expansion of
$ g(z) = z^m f(r), r_j = |z_j|.$
Indeed,
$$ \sum_{\alpha} (g,\Phi_{\alpha,\alpha+m})\Phi_{\alpha,\alpha+m}(z)
= z^m  \sum_{\alpha} (f, \Psi_{\alpha}^m) \Psi_{\alpha}^m(r).$$
Consider the Riesz transform $ T_{\bar{z}_j} = \overline{Z}_j(1)L^{-1/2} $
applied to the function $ g.$ Since
$$ \overline{Z}_j(1)\Phi_{\alpha,\beta} = i(2\alpha_j)^{1/2}
\Phi_{\alpha-{e_j},\beta} $$ it follows that
$$ T_{\bar{z}_j}g(z) = i\sum_{\alpha} (2\alpha_j)^{1/2}(2|\alpha|+n)^{-1/2}
 (g,\Phi_{\alpha,\alpha+m})\Phi_{\alpha-e_j,\alpha+m}(z) $$
which can be
expressed in terms of $ f $ as
$$   T_{\bar{z}_j}g(z)= i z^{m+e_j} \sum_{\alpha} (2\alpha_j)^{1/2}
(2|\alpha|+n)^{-1/2} (f, \Psi_{\alpha}^m)
\Psi_{\alpha-e_j}^{m+e_j}(r).$$
Thus we have ( with $ r_j = |z_j|$)
$$  T_{\bar{z}_j}g(z) = i z^m \frac{z_j}{|z_j|}R_{j,m}f(r)$$
where
$$ R_{j,m}f(r) = r_j  \sum_{\alpha}(2\alpha_j)^{1/2}(2|\alpha|+n)^{-1/2}
(f, \Psi_{\alpha}^m) \Psi_{\alpha-e_j}^{m+e_j}(r) $$
are  the Riesz transforms associated to Laguerre expansions of type $ m.$
These transforms have been studied by Nowak and Stempak \cite{NS1}
and \cite{NS2}. From the above relation we obtain

\begin{thm} The Laguerre Riesz transforms $ R_{j,m} $ are bounded on
$ L^p(\R_+^n, d\mu_m) $ if and only $ T_{\bar{z}_j} $ satisfy the weighted norm
inequality
$$ \int_{\C^n} |T_{\bar{z}_j}g(z)|^p \prod_{j=1}^n |z_j|^{m_j(2-p)} dz
\leq C \int_{\C^n} |g(z)|^p  \prod_{j=1}^n |z_j|^{m_j(2-p)} dz $$
for all functions $ g $ which are $ m-$homogeneous, i.e. $ g(z) = z^m f(z) $
where $ f $ is polyradial.
\end{thm}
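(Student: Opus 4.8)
The plan is to reduce the weighted inequality for $T_{\bar z_j}$ to a genuine (unweighted) $L^p(\R_+^n,d\mu_m)$ estimate for $R_{j,m}$ by passing to polar coordinates in each complex variable, exploiting the pointwise identity
$$ T_{\bar z_j}g(z) = i\, z^m \frac{z_j}{|z_j|}\, R_{j,m}f(r), \qquad r_k = |z_k|, $$
established just above for $g(z) = z^m f(r)$ with $f$ polyradial. The first step is the elementary observation that, because $|z_j/|z_j||=1$ and $|z^m| = \prod_k r_k^{m_k}$, multiplying by the weight collapses all the exponents nicely:
$$ |T_{\bar z_j}g(z)|^p \prod_{k=1}^n |z_k|^{m_k(2-p)} = \prod_{k=1}^n r_k^{2m_k}\,|R_{j,m}f(r)|^p, \qquad |g(z)|^p \prod_{k=1}^n |z_k|^{m_k(2-p)} = \prod_{k=1}^n r_k^{2m_k}\,|f(r)|^p . $$
Both right-hand sides are polyradial, i.e.\ depend only on $r$.

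The second step is to integrate these identities over $\C^n$ in polar coordinates $z_k = r_k e^{i\theta_k}$, for which $dz = \prod_k r_k\, dr_k\, d\theta_k$. Since the integrands do not depend on the angles, integrating them out produces a factor $(2\pi)^n$, while $\prod_k r_k^{2m_k}\cdot\prod_k r_k\, dr_k$ is precisely $d\mu_m(r)$. Hence
$$ \int_{\C^n} |T_{\bar z_j}g|^p \prod_{k=1}^n |z_k|^{m_k(2-p)}\, dz = (2\pi)^n \int_{\R_+^n} |R_{j,m}f|^p\, d\mu_m, \qquad \int_{\C^n} |g|^p \prod_{k=1}^n |z_k|^{m_k(2-p)}\, dz = (2\pi)^n \int_{\R_+^n} |f|^p\, d\mu_m . $$
Thus the weighted inequality in the statement, evaluated on the $m$-homogeneous function $g = z^m f(r)$, is word for word (constant included) the inequality $\|R_{j,m}f\|_{L^p(\R_+^n,d\mu_m)} \le C\,\|f\|_{L^p(\R_+^n,d\mu_m)}$.

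The last step is to note that $f \mapsto g = z^m f(r)$ is, by the second norm identity above, a bijective isometry (up to the constant $(2\pi)^{n/p}$) from $L^p(\R_+^n,d\mu_m)$ onto the subspace of $m$-homogeneous functions in $L^p\big(\C^n,\prod_k |z_k|^{m_k(2-p)}\,dz\big)$; so boundedness of $R_{j,m}$ on $L^p(d\mu_m)$ is equivalent to the asserted weighted bound for $T_{\bar z_j}$ restricted to $m$-homogeneous functions, which is exactly the claimed ``if and only if''. The identity linking $T_{\bar z_j}g$ with $R_{j,m}f$ is first proved, through the spectral decomposition of $L$ and the action $\overline{Z}_j(1)\Phi_{\alpha,\beta} = i(2\alpha_j)^{1/2}\Phi_{\alpha-e_j,\beta}$, on the dense class of $f$ that are finite linear combinations of the $\Psi_\alpha^m$ (i.e.\ on $L^2(d\mu_m)\cap L^p(d\mu_m)$), and it extends to the relevant $L^p$ spaces by the two norm identities just derived. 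I expect no serious obstacle: the only points requiring a little care are this density and extension argument and the bookkeeping of the powers of $r_k$ that reproduces $d\mu_m$; once the two norm identities are in hand, the equivalence is immediate.
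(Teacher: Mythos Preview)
Your proposal is correct and is exactly the argument the paper has in mind: the paper states the theorem as an immediate consequence of the pointwise identity $T_{\bar z_j}g(z)=i\,z^m\frac{z_j}{|z_j|}R_{j,m}f(r)$ derived just before, without writing out the change-of-variables computation you (correctly) supply. Your bookkeeping of the exponents and the passage to polar coordinates reproducing $d\mu_m$ are accurate, so nothing further is needed.
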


When $ m = 0 $ we do have the boundedness of $  T_{\bar{z}_j} $ on
$ L^p(\C^n), 1 < p < \infty $ and hence the Riesz transforms $ R_{j,0} $ are
all bounded on $L^p(\R_+^n, d\mu_m) .$ However, for general $ m $ we can use
the result of \cite{NS1} which says that  $ R_{j,m} $ are bounded on
$ L^p(\R_+^n, d\mu_m) $ to obtain the following result.

\begin{cor} For any $ m \in \N^n $ we have the weighted norm inequality
$$ \int_{\C^n} |T_{\bar{z}_j}g(z)|^p \prod_{j=1}^n |z_j|^{m_j(2-p)} dz
\leq C \int_{\C^n} |g(z)|^p  \prod_{j=1}^n |z_j|^{m_j(2-p)} dz $$
for all $ m-$homogeneous $ g \in L^p(\C^n), 1 < p < \infty .$
\end{cor}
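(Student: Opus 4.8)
The plan is to obtain the corollary as an immediate consequence of the preceding theorem together with the $ L^p $-boundedness of the Laguerre Riesz transforms $ R_{j,m} $ proved by Nowak and Stempak in \cite{NS1}. The preceding theorem states that the displayed weighted inequality for $ T_{\bar{z}_j} $, restricted to $ m $-homogeneous functions, is \emph{equivalent} to the estimate $ \| R_{j,m} f \|_{L^p(\R_+^n, d\mu_m)} \leq C \| f \|_{L^p(\R_+^n, d\mu_m)} $; so once the latter is in hand there is nothing left to prove. The sole external input is the theorem of \cite{NS1}, which supplies exactly this boundedness for all $ 1 < p < \infty $ and every $ m \in \N^n $.

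To make the reduction completely explicit I would recall, for an $ m $-homogeneous $ g(z) = z^m f(r) $ with $ r_j = |z_j| $, the identity $ T_{\bar{z}_j} g(z) = i z^m \frac{z_j}{|z_j|} R_{j,m} f(r) $ established just before the preceding theorem. Taking $ p $-th powers, using $ |z^m|^p \prod_{j} |z_j|^{m_j(2-p)} = \prod_{j} |z_j|^{2 m_j} $ together with the fact that $ z_j/|z_j| $ has modulus $ 1 $, and then passing to polar coordinates in each complex variable so that $ \prod_{j} |z_j|^{2 m_j}\, dz $ becomes $ (2\pi)^n\, d\mu_m(r) $, one obtains
$$ \int_{\C^n} |T_{\bar{z}_j} g(z)|^p \prod_{j=1}^n |z_j|^{m_j(2-p)}\, dz = (2\pi)^n \int_{\R_+^n} |R_{j,m} f(r)|^p\, d\mu_m(r), $$
and likewise the right-hand side of the claimed inequality equals $ (2\pi)^n \int_{\R_+^n} |f(r)|^p\, d\mu_m(r) $. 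Hence the weighted inequality is literally the $ L^p(\R_+^n, d\mu_m) $-boundedness of $ R_{j,m} $, and invoking \cite{NS1} completes the proof.

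There is essentially no obstacle here: the remaining work is confined to bookkeeping with normalisation constants and to confirming that the operator $ R_{j,m} $ above coincides, up to such constants, with the Laguerre Riesz transform studied in \cite{NS1}. The one genuinely non-trivial ingredient --- the $ L^p $ bounds for the Laguerre Riesz transforms --- is imported wholesale from \cite{NS1}. When $ m = 0 $ one may bypass that reference altogether, since then $ m_j(2-p) = 0 $ and the inequality reduces to the unweighted bound $ \| T_{\bar{z}_j} g \|_p \leq C \| g \|_p $ for polyradial $ g $, which is already contained in the boundedness of $ T_{\bar{z}_j} $ on $ L^p(\C^n) $ proved earlier in this section.
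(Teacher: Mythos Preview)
Your proposal is correct and follows exactly the approach indicated in the paper: the corollary is obtained by combining the equivalence in Theorem~4.2 with the $L^p(\R_+^n,d\mu_m)$-boundedness of $R_{j,m}$ from \cite{NS1}. Your explicit change-of-variables computation and the remark on the $m=0$ case are precisely what the paper sketches in the paragraph preceding the statement.
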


Since the Riesz transforms $T_{\bar{z}_j} $ are (oscillatory) singular
integral operators it is natural to expect that they will satisfy weighted
norm inequalities  provided the weight is coming from $ A_p $ class. In the
above inequality the weight function $ w(z) = \prod_{j=1}^n |z_j|^{m_j(2-p)} $
belongs to $ A_p $ only if $ 2(1-\frac{1}{2+m_j}) < p < 2(1+\frac{1}{m_j}).$
Thus
for functions with certain homogeneity we can expect weighted norm
inequalities under weaker assumptions. It would be interesting to
characterise all weight functions for which the Riesz transforms $ T_P $
satisfy weighted norm inequalities. We plan to return to this problem in the
future.

Interchanging the roles of $ T_{\bar{z}_j} $ and  $ R_{j,m} $ and using
Theorem 4.1 we can obtain the following

\begin{cor}  For any $ m \in \N^n $ we have the weighted norm inequality
$$ \int_{\R_+^n} |R_{j,m}f(r)|^p \prod_{j=1}^n |r_j|^{m_j(p-2)} d\mu_{m}
\leq C_p \int_{\R_+^n} |f(r)|^p  \prod_{j=1}^n |r_j|^{m_j(p-2)} d\mu_{m} $$
for all $ f \in L^p(\R_+^n), 1 < p < \infty $ where $ C_p $ is independent
of $ n $ and $ m.$
\end{cor}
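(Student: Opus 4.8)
The plan is to deduce this corollary by running the correspondence between $ T_{\bar{z}_j} $ and $ R_{j,m} $ underlying Theorem 4.2, but with the roles of the two operators interchanged: in Theorem 4.2 the (free) boundedness of $ R_{j,m} $ on $ L^p(\R_+^n, d\mu_m) $ is equivalent to a weighted estimate for $ T_{\bar{z}_j} $, whereas here we start from the unweighted boundedness of $ T_{\bar{z}_j} $ on $ L^p(\C^n) $ and extract a weighted estimate for $ R_{j,m} .$ Recall from the computation preceding Theorem 4.2 that if $ g(z) = z^m f(r) $ with $ f $ polyradial and $ r_j = |z_j| $, then $ T_{\bar{z}_j} g(z) = i\, z^m \frac{z_j}{|z_j|}\, R_{j,m} f(r) $, so that $ |T_{\bar{z}_j} g(z)| = \big(\prod_{j=1}^n r_j^{m_j}\big)\, |R_{j,m} f(r)| $ and $ |g(z)| = \big(\prod_{j=1}^n r_j^{m_j}\big)\, |f(r)| .$ The first step is to apply the bound $ \|T_{\bar{z}_j} g\|_{L^p(\C^n)} \le C_p \|g\|_{L^p(\C^n)} $, which holds by Theorem 4.1 with $ C_p $ independent of $ n $ since $ T_{\bar{z}_j} $ is a first order Riesz transform, to the particular $ m $-homogeneous function $ g = z^m f .$

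Next I would express both sides of this $ L^p(\C^n) $ inequality as integrals over $ \R_+^n .$ Passing to polar coordinates in each complex variable and integrating out the angular variables, for a nonnegative polyradial $ F $ one has $ \int_{\C^n} F(|z_1|,\dots,|z_n|)\, dz = (2\pi)^n \int_{\R_+^n} F(r)\, \prod_{j=1}^n r_j\, dr_j .$ Applying this with $ F = |T_{\bar{z}_j} g|^p $ and with $ F = |g|^p $, the factor $ |z^m|^p = \prod_{j=1}^n r_j^{m_j p} $ combines with the Jacobian $ \prod_{j=1}^n r_j\, dr_j $ to give $ \prod_{j=1}^n r_j^{m_j p + 1}\, dr_j = \prod_{j=1}^n |r_j|^{m_j(p-2)}\, d\mu_m(r) $, since $ m_j p + 1 = m_j(p-2) + (2m_j+1) .$ Hence
$$ \int_{\C^n} |T_{\bar{z}_j} g(z)|^p\, dz = (2\pi)^n \int_{\R_+^n} |R_{j,m} f(r)|^p \prod_{j=1}^n |r_j|^{m_j(p-2)}\, d\mu_m , $$
and the same identity holds with $ R_{j,m} f $ replaced by $ f .$ The factor $ (2\pi)^n $ then cancels from the two sides, and the asserted inequality falls out; finiteness of its right-hand side is exactly the statement that $ g = z^m f \in L^p(\C^n) $, so a routine density reduction (first to, say, compactly supported continuous $ f $) justifies all the manipulations.

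The only genuine point to verify is that the resulting constant is uniform in both $ n $ and $ m .$ Independence of $ n $ is supplied by the dimension-free bound for the first order Riesz transform $ T_{\bar{z}_j} $ on $ L^p(\C^n) $ recorded in Theorem 4.1. Independence of $ m $ is automatic, and is the conceptual point worth emphasizing: we never invoke any $ m $-dependent (weighted) estimate for $ T_{\bar{z}_j} $, but merely feed the $ m $-homogeneous function $ g = z^m f $ into an operator whose $ L^p(\C^n) $ operator norm has already been controlled, and all of the $ m $-dependence is then transferred, through the change of measure, onto the $ \R_+^n $ side. I expect the one thing that must be done with care to be this exponent bookkeeping --- matching the power $ m_j p + 1 $ coming from $ |z^m|^p $ times the Jacobian against $ m_j(p-2) $ together with the power $ 2m_j+1 $ that is built into $ d\mu_m $ --- after which the corollary is immediate.
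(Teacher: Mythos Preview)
Your proof is correct and follows exactly the approach the paper indicates in one line: interchange the roles of $T_{\bar z_j}$ and $R_{j,m}$ in the correspondence preceding Theorem~4.2 and feed in the dimension-free bound from Theorem~4.1. You have carefully written out the change-of-measure computation (the matching of the exponent $m_jp+1$ against $m_j(p-2)+(2m_j+1)$) that the paper leaves implicit, and correctly isolated the reason the constant is independent of both $n$ and $m$.
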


It would be interesting to see if such dimension-free, type-free
estimates are true for
general multiple Laguerre expansions of type $ m $ studied by \cite{NS1} where
$ m_j$'s are not necessarily integers. Riesz transforms for  multiple
Laguerre expansions of a different kind have been studied by Gutierrez et al.
in \cite{GIT}. When $ m_j$'s are half integers they have shown that the
Riesz transforms satisfy dimension-free bounds.

\begin{center}
{\bf Acknowledgments}

\end{center}
The authors are extremely  thankful to the referee for his thorough and
careful reading of the manuscript and for pointing out a couple of
mathematical errors and a host of typos.  The work of the first author is supported by the All India Council For Technical Education.  The work of the second author is supported  by J. C. Bose Fellowship from
the Department of Science and Technology (DST) and also by a grant from UGC
via DSA-SAP.


\begin{thebibliography}{99}

\bibitem{CC} S. Chanillo and M. Christ, \textit{Weak (1,1) bounds for
oscillatory singular integrals}, Duke J. Math. {\bf 55}(1987), 141-155.

\bibitem{C} M. Christ, \textit{ Hilbert transforms along curves I. Nilpotent
groups}, Ann. Math. {\bf 122}(1985), 575-596.

\bibitem{CMZ} T. Coulhon, D . Mueller and J. Zienkiewicz, \textit{About Riesz
transforms on Heisenberg groups}, Math. Annalen {\bf 312}(1996), 369-379.

\bibitem{DR} J. Duoandikoetxea and J. L. Rubio de Francia, \textit{Estimations
independantes de la dimension pour les tranformees de Riesz}, C.R. Acad. Sci.
Paris, Ser. I {\bf 300}(1985), 143-162.

\bibitem{ERS} A. F. M. ter Elst, D. W. Robinson and A. Sikora, \textit{ Heat
kernels and Riesz transforms on nilpotent Lie groups}, Colloq. Math. {\bf 74}
(1997), 191-218.

\bibitem{F} G. B. Folland, \textit{Fundamental solution for a subelliptic  
operator}, Bull. Amer. Math. Soc. {\bf 79}(1973), 373-376.


\bibitem{FS2}  G.B. Folland and E. M. Stein, \textit{Hardy spaces on
homogeneous groups}, Math. Notes. {\bf 28}(1982), Princeton Univ. Press,
Princeton, N.J.

\bibitem{G} D. Geller, \textit{Spherical harmonics, the Weyl transform and the Fourier transform on the Heisenberg group}, Can. J. Math. {\bf 36}(1984),
615-684.

\bibitem{GIT} C. Gutierrez, A. Incognito and J. L. Torrea, \textit{ Riesz
transforms, g-functions and multipliers for the Laguerre semigroup}, Houston
J. Math. 27(2001), 579-592.

\bibitem{HRST} E. Harboure, L. de Rosa, C. Segovia and J. L. Torrea,
\textit{ $ L^p $ dimension free boundedness for Riesz transforms associated
to Hermite functions}, Math. Annalen {\bf 328}(2004), 653-682.

\bibitem{HSTV} E. Harboure, C. Segovia, J. L. Torrea and B. Viviani,  \textit{
Power weighted $ L^p $ inequalities for Laguerre Riesz transforms}, Ark. Math.
{\bf 46}(2008), 285-313.

\bibitem{KN1} T. Kobayashi and A. Nilsson  \textit{Indefinite Riesz
transforms}, Ark. Math. {\bf 47}(2009),331-344.

\bibitem{KN2} T. Kobayashi and A. Nilsson  \textit{Group invariance and
$ L^p $ bounded operators}, Math. Z. {\bf 260}(2008), 335-354.

\bibitem{LV} N. Lohou\'e and N. T. Varopoulos, \textit{ Remarques sur les transformees de Riesz sur les groupes Lie nilpotents}, C. R. Acad. Sci. Paris, Ser. I
{\bf 301}(1985),559-560.

\bibitem{LP} F. Lust-Piquard, \textit{Dimension free estimates for Riesz
transforms associated to the harmonic oscillator on $ \R^d $}, Potential
Analysis {\bf 24}(2006),47-62.


\bibitem{M} G. Mauceri, \textit{The Weyl transform and bounded operators
on $ L^p(\R^n)$}, J. funct. Anal. {\bf 39}(1980), 408-429.

\bibitem{NS1} A. Nowak and K. Stempak, \textit{Riesz transforms for
multi-dimensional Laguerre function expansions}, Adv. Math. {\bf 215}(2007),
642-678.

\bibitem{NS2} A. Nowak and K. Stempak,   \textit{Riesz transforms and
conjugacy for Laguerre expansions of Hermite type},
J. Funct Anal.{\bf 244}(2007), 399-433.

\bibitem{GP} G. Pisier, \textit{Riesz transforms: a simpler analytic proof
of P. A. Meyer's inequality}, Lecture Notes in Math. {\bf 1321}(1998),
Springer-Verlag, 485-501.

\bibitem{RS} F. Ricci and E. M. Stein, \textit{Harmonic analysis on nilpotent
groups and singular integrals I: Oscillatory integrals}, J. Funct. Anal. {\bf
73}(1987), 179-194.

\bibitem{S1} E. M. Stein, \textit{Singular integrals and differentiability
properties of functions}, Princeton Univ. Press, Princeton, N.J. (1971)


\bibitem{S3} E. M. Stein \textit{Harmonic Analysis: real variable methods,
orthogonality and oscillatory integrals}, {\bf43 } Princeton Univ. Press,
Princeton, N.J. (1993).

\bibitem{SW} E. M. Stein and G. Weiss, \textit{Introduction to Fourier 
Analysis on Euclidean spaces}, {\bf 32} Princeton Univ. Press, Princeton. 
N.J.(1971).

\bibitem{ST} K. Stempak and J. L. Torrea, \textit{Poisson integrals and Riesz
transforms for Hermite function expansions with weights}, J. Funct. Anal.
{\bf 202}(2003), 443-472.

\bibitem{St} R. S. Strichartz, \textit{$ L^p $ harmonic analysis and Radon
transforms on the Heisenberg group}, J. Funct. Anal. {\bf 96}(1991),350-406.

\bibitem{T0} S. Thangavelu, \textit{ Lectures on Hermite and Laguerre
expansions}, Math. Notes {\bf 42}(1993), Princeton Univ. Press, Princeton, N.J.

\bibitem{T1} S. Thangavelu, \textit{ Harmonic analysis on Heisenberg groups},
Prog. Math. {\bf 159}(1998), Birkhauser-Boston.

\bibitem{T2}  S. Thangavelu, \textit{ An introduction to the uncertainty
principle}, Prog. Math. {\bf 217}(2004), Birkhauser-Boston.






\end{thebibliography}
\end{document}